\title{A Canonical Partition of the Primes of Logic Functions}
\author{Sidnie Feit}
\begin{document}

\theoremstyle{plain}  \newtheorem{theorem}{Theorem}[section]
\theoremstyle{plain}  \newtheorem{lemma}{Lemma}[section]
\theoremstyle{plain}  \newtheorem{corollary}{Corollary}[section]
\theoremstyle{definition}  \newtheorem{definition}{Definition}[section]

\maketitle
\begin{center}
Copyright 2012, 2013, 2014 Sidnie Feit
\end{center}

%%%%%%%  Version 10 updated  7/29/2014  %%%%%%%%

%%%%%% SECTION  INTRODUCTION  %%%%%%
\section{Introduction}
This paper deals with Boolean functions, called \emph{logic functions} in circuit and VLSI theory.\footnote{The book [\ref{Hachtel}] by Hachtel and Somenzi provides a clear presentation of the role of Logic (i.e., Boolean) functions in VLSI theory.} 
A Boolean function in $n$ binary variables, 
\[f(x_1,\dots, x_n)\rightarrow \{0,1\},~ x_i \in \{0,1\}, ~i=1,\dots,n\] 
assigns a value of $1$ (TRUE) or $0$ (FALSE ) to each of the $2^n$ n-tuples that make up its domain. A Boolean function can be represented by a formula built using AND, OR and NOT operations. In this paper, `$*$' 
represents AND, `$+$' represents OR,  and $x'$ represents the negation NOT $x$ (also called the complement of $x$). 

An AND of variables (some of which may be negated) is called a \emph{product}, and an OR of products is called a \emph{sum-of-products}. Every Boolean function can in fact be represented by a sum-of-products  formula, such as
\begin{equation} 
\label{SampleSOP}
f = x_2*x_4'*x_8*x_9~+~ x_1*x_5*x_8'*x_{10}*x_{11}~+~x_1*x_6'*x_{10}'*x_{11} 
\end{equation}
A sum-of-products representation of a Boolean function is far from unique. Any Boolean function  can be represented by an enormous number of  distinct, logically equivalent sums-of-products. 

\begin{definition}
A product $X$ is an \emph{implicant} for Boolean function $f$ if the assignment of variable values that make product $X=1$ also makes $f=1$.  
\end{definition}
For example, if 
\begin{equation}
\label{Xterm}
X= x_2*x_4'*x_8*x_9*x_{12} 
\end{equation}
$X=1$ if and only if $x_4=0$ and the rest of the variables equal $1$. 
Substituting these values into 
the sum-of-products in \eqref{SampleSOP} results in $f=1$. 
Hence $X$ is an implicant for $f$. 
\begin{definition}
A prime is an implicant that no longer is an implicant if any factor is removed. 
\end{definition}
$X$ in \eqref{Xterm} is not prime, because 
if we remove factor $x_{12}$, then $Y= x_2*x_4'*x_8*x_9$ is an implicant.
Note that the set of factors of any implicant must include the set of factors of a prime implicant. In this example, $Y$ is a prime.

The set consisting of all of the primes for function $f$ is called its \emph{complete set of primes}. 
The sum of all of the primes in the complete set for $f$ is a canonical (but usually extremely inefficient)
representation of $f$. 

There is a classic problem associated with the primes of  a function.
\begin{quote}\emph{Given a Boolean function $f(x_1,\dots,x_n)$
and a cost function $C$  such that
\begin{itemize}
\item[] $C$ assigns a positive cost to each Boolean product,
\item[] $C$ is additive, that is, given a set of Boolean products $X_1,\dots,X_n$,
\item [] $\qquad Cost(X_1,\dots, X_n)=Cost(X_1) + \dots + Cost(X_n),$
\end{itemize}
find a minimum-cost set of primes whose sum is equal to $f$.
}\end{quote}

This problem originated in the world of logic circuits, and initially was aimed at finding a minimum-cost \emph{sum of products}
equal to a given logic (Boolean) function. Quine[\ref{Quine}] 
proved that a minimum-cost sum-of-products for a Boolean function $f$ must consist of a sum of primes if any definition of cost is used in which the addition of a single factor to a formula increases the cost. 

Here, we look for minimum-cost sums-of-primes for somewhat more general cost functions.
All of the cost functions in this paper are assumed to be positive and additive. 
A set of primes whose sum is equivalent to $f$ will be called a \emph{basis} for $f$. 
The cost of a basis is equal to the sum of the costs of the members of the basis.
A solution to the classic problem is given by a basis whose cost is minimal  
with respect to the given cost function.

This paper presents algorithms that partition the complete set of primes for $f$
into $N+2$ canonical disjoint subsets such that for \emph{any} positive additive 
cost function $C$,
solving the minimization problem consists of solving
$N$ separate minimization problems. 
The algorithms calculate the following:
\begin{enumerate}
\item $Essential~Primes$, which \emph{must} be part of \emph{any} basis for $f$,
\item $Unnecessary~Primes$ that \emph{cannot} be part of a minimum-cost 
basis for $f$ for any positive additive cost function,  
\item $N$ unique disjoint sets of primes, $PS_1, \dots, ~PS_N$ associated with
`covering' tables $TS_1, \dots, ~TS_N$ such that a minimum-cost basis for a positive, additive cost function $C$ consists of 
\begin{itemize}
\item[] $\{Essential~ Primes\} \cup QS_1(C) \cup \dots \cup  QS_N(C)$
\end{itemize}
\emph{where $QS_i (C) \subset PS_i$  and  $QS_i$  is a minimum-cost `cover' for $PS_i$,}\\
\emph{as determined by $C$ and $TS_i$.}
\end{enumerate}
Covering relationships are determined by an operation, $Cascade(RS,TS)\longrightarrow T$, where $RS$ is a set of products and $TS$ is the table of covering relationships associated with a set of primes $PS$. $RS$ covers $PS$ if and only if  $Cascade(RS, TS)$ is an empty table.
Thus, $QS_i \subset PS_i$ is a minimum-cost set such that $Cascade(QS_i, TS_i)$ is empty.

Tables and covering relationships in this paper are 
somewhat unusual due to the iterative nature of the algorithms. 
$QS_{i+1}$ covers $PS_{i+1}$ 
\emph{relative to the fact that} all products already covered by the primsets $PS_1,\dots PS_i$ 
have been removed from table $TS_{i+1}$ (and from the problem).
  
The paper also contains results that solve or simplify the problem of finding 
$QS_i$ when $TS_i$ satisfies certain conditions.  
If a subproblem satisfies the hypotheses
of Theorem \ref{SpanBasisTheorem}, the Span Basis Theorem,  a solution
consists of one easily identified prime.
If a subproblem satisfies
the hypothesis of Theorem \ref{IndPrimeDecomp}, 
the Independent Prime Decomposition Theorem, 
the subproblem can be decomposed into smaller subproblems, 
and at least one of these is solved by 
an easily identified prime.  

The input data for the partitioning algorithms
consists of the primes for $f$, the products 
generated by consensus combinations of the primes, and a 
table $triples(f)= \{(r,i,j)\}$ that expresses the covering relationships 
between products.

A key new concept in this paper is that of Ancestor Sets, whose elements are
easily derived from the $triples(f)$ table.
The Ancestor Theorem that follows is an indispensable result in this paper.  
\begin{theorem}[Ancestor Theorem]
If $A$ is an Ancestor Set for $f$ and $C$ is a positive additive cost function, then 
every minimum-cost basis consists of the union of the Essential Primes, 
a minimum-cost cover
 $QS(C)$ for the primes $PS$ that belong to ancestor 
 set $A$, and a minimum-cost cover for the set of primes that are 
not in $A$ (and are not covered by the union of the Essential Primes and $PS$).\end{theorem}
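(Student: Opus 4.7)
My plan is to prove the theorem by showing two things: (i) a set constructed as the stated union is always a basis for $f$, and (ii) no basis can have smaller cost. The first part is essentially bookkeeping once the defining property of an Ancestor Set is in hand; the second part is where the Ancestor Set definition must be used in an essential way.

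I would begin by fixing notation. Let $E$ denote the Essential Primes, let $PS \subseteq A$ denote the primes in the Ancestor Set $A$ (minus any Essential Primes), and let $RS$ denote the set of primes that are not in $A$ and are not covered by $E \cup PS$. The structural fact I expect to use from the (as yet undefined) Ancestor Set machinery is a \emph{closure property}: any product or prime that enters into a covering relationship with a member of $PS$ through a triple in $triples(f)$ is itself in $A$ (up to products already handled by $E$). This is precisely what the word \emph{ancestor} is meant to encode, and without it the statement cannot be true. Assuming this, the covering tables cleanly split: $TS$ restricted to $A$ depends only on primes in $A$, and the covering obligations outside $A$ that remain after including $E \cup PS$ involve only primes outside $A$.

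Given that splitting, the validity part is immediate: $E$ must be in every basis by definition, a minimum-cost cover $QS(C)$ of $PS$ discharges all covering obligations \emph{within} $A$, and a minimum-cost cover of the remaining primes discharges the rest. The union therefore satisfies every covering requirement of $f$, so by the correspondence between covers and bases it is a basis. For the minimality part, I would take an arbitrary basis $B$ of minimum cost and decompose $B = (B \cap E) \cup (B \cap PS) \cup (B \setminus (E \cup PS))$. The closure property forces $B \cap PS$ to cover $PS$ on its own: if any covering obligation inside $A$ were discharged by a prime outside $A$, the Ancestor closure would be violated. Hence $\mathrm{Cost}(B \cap PS) \ge \mathrm{Cost}(QS(C))$, and symmetrically the complement piece is at least as expensive as a minimum-cost cover of the remainder. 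Additivity of $C$ then gives that $B$ costs at least as much as the stated union, which proves minimality.

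The main obstacle will be pinning down the Ancestor closure property rigorously from the triples encoding. Everything else is standard reasoning about set covers and additive costs; the substance of the theorem lives in the claim that $A$ decouples from its complement with respect to the covering relation induced by $triples(f)$. My proof would make that decoupling a named lemma, derive it directly from whatever definition of Ancestor Set the paper gives in terms of $triples(f)$, and then the two inequalities above follow mechanically.
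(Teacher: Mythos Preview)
Your approach is essentially the paper's: the closure property you isolate is exactly the observation the paper opens its proof with (``by the definition of Ancestor Sets, every non-free prime ancestor of any $P\in PS$ belongs to $PS$''), and your decompose-and-compare argument is a mild repackaging of the paper's replacement argument (take a minimum-cost basis, show the part covering $PS$ lies in $PS$, then swap it for a minimum-cost cover of $PS$ to derive a contradiction). One small caveat: the complement direction is not literally ``symmetric''---there is no Ancestor closure on the outside---but it still follows, since once $B\cap PS$ covers $PS$ (hence all of $A$), cascading $E\cup PS$ leaves exactly the $RS$ obligations, which $B\setminus(E\cup PS)$ must then discharge.
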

An \emph{Independent Ancestor Set} is an Ancestor Set that does 
not contain any other Ancestor Set.  The Independent Ancestor Sets are
unique and disjoint, 
and each $PS_i$ in the partition is equal to the the set of primes in an Independent
Ancestor Set $A_i$. 

A straightforward algorithm applied to the $triples$ table identifies a
unique initial group of disjoint
Independent Ancestor Sets, say, $A_1,\dots,A_k$, with primesets $PS_1,\dots,PS_k$.  
Because of the Ancestor Theorem, every minimum-cost basis for $f$ must include
minimum-cost covers for each primeset $PS_i, i=1,\dots,k$. 

But then all products covered by the union of 
the Essential Primes and  the $PS_i$ 
can be removed from the $triples$ table and from the partitioning problem. 
The remaining Independent Ancestor Sets are found by iterating the algorithm
on the reduced $triples$ table until the $triples$ table is empty.
A minimum-cost cover for each new Independent Ancestor Set
$A_j$ will be part of a minimum-cost basis for $f$.  

If there are $K$ products and the $triples$ table has $M$ rows,   
the partitioning algorithms execute with worst-case complexity  $M*log_2(M)*K$.
When there are some Essential Primes, the number of products and
the number of rows in the table are reduced in size significantly.

The algorithms in this paper calculate  the primes, the relevant consensus 
combinations of primes,  
the Independent Ancestor Sets $A_i$, the $PS_i$, and the tables $TS_i$. 
Although the algorithmic proofs are valid, the worst-case size of the data
can be enormous. First of all,
to get a sense of the number of primes that could be associated with
a Boolean function, McMullen and Shearer [\ref{McMullen}] found the 
following upper bound and constructed a function that attained
the upper bound:
\begin{itemize}
\item[] If $m$ equals the minimum number of products in a sum-of products that
represents $f$, then the the number of primes for $f$ is  less than or equal to $2^m - 1$.
\end{itemize}
Furthermore, the task of finding a minimum-cost $QS_i(C)$ 
can be framed as an integer programming problem,
and a general integer programming problem can be NP-hard.   

However, it is possible that new algorithms may be discovered
that split up an oversized problem by extracting 
some Ancestor Sets from the problem.
Also, the tables for some Ancestor Sets satisfy hypotheses 
that make it possible to find minimum-cost covers in linear or polynomial time.

But it is not surprising that much effort has been devoted to finding
algorithms that lead to approximate solutions. 
The classic problem has been studied extensively 
using a geometric model that represents 
products as "cubes" and merges smaller cubes into larger ones. 
This model underlies 
various versions of the popular heuristic ESPRESSO algorithm [\ref{Espresso}], which  
originally was developed by Robert Brayton at IBM. ESPRESSO frequently is used to compute
 approximations to a minimum-cost basis. Sometimes an exact solution is sought, and in this case, the first step in ESPRESSO is an attempt to calculate the 
 complete set of primes for $f$.\footnote{Another approach uses 
 data structures called Binary Decision Diagrams (see [\ref{Boute}])  represent a Boolean function as a rooted, directed, acyclic graph. These diagrams have led to advances in formal verification of circuits and logic minimization.}

%%%%%%%%%  SECTION  The Consensus Identity  and Consensus Operations %%%%%
\section{The Consensus Identity and Consensus Operations}\label{consensusID} 
This section and the sections that follow include statements of the type 
\[formula~E = formula~F.\]
\noindent In Boolean algebra, such a statement represents 
`formula $E$ is logically equivalent to formula $F$' or 
`formula $E$ represents the same Boolean function as formula $F$.' 
This  paper follows the usual practice of using an equals sign to denote logical equivalence.

In the following,  small letters such as $x, y, z$ represent binary ($0$ or $1$) logic variables.
Capital letters, including $P, Q, R, U, V, W, X, Y, Z$ will  represent 
products whose factors are binary variables or their negations.

The standard consensus identity and the consensus operation of Boolean algebra play a major role 
in this paper. The consensus identity for binary variables  $x$, $y$, and $z$, is simple:
\[x*y + x'*z  = x*y + x'*z +  ~~ \textbf{y*z}\]
\noindent Note that the products $x*y$ and $x'*z$ have one \emph{reversal} variable --- that is, one variable, $x$, that is not negated in one product and is negated, $x'$,  in the other. The identity also 
holds when we replace $y$ and $z$ with products $Y$ and $Z$ such that the pair $Y$ and $Z$ has no reversal variable.
\[x*Y+x'*Z  = x*Y + x'*Z+ ~~\textbf{Y*Z}\]
Product $Y*Z$ is called the \emph{consensus} of $x*Y$ with $x'*Z$, written
\[Y*Z = (x*Y~ \circ~ x'*Z).\]
The consensus operation $(U  \circ V )$ for products $U$ and $V$ is defined and valid only for products $U$ and $V$ that have exactly one reversal variable.   The result  consists of the product of all of the factors in $U$ and $V$ (with duplicates removed) except for the reversal factors. If $(U \circ V)$ and $(Y \circ W)$ are valid, then
 \[U+V+Y + W= U + V + ~\textbf{(UoV)} + Y + W +~\textbf{(YoW)}\]
If $(~(U\circ V)\circ(Y \circ W)~)$ is valid, then
\[U+V+Y + W= U + V + (U \circ V) + Y + W + (Y \circ W) + \mathbf{ ((U \circ V)\circ(Y \circ W)) } \]

\begin{definition}\label{DefConsensus} 
Suppose that $\{X_1,\dots X_k\}$ is a set of distinct products. Then 
\[Consensus(X_1,\dots,X_k)\]
 equals $\{X_1, \dots , X_k\}$ and all products generated by their valid consensus combinations.
\end{definition}

The following procedure, here called \emph{Loose Iterated Consensus}, is an orderly way to generate $Consensus(X_1,\dots, X_k), k>1$. Initially, start with the list of the distinct products, $X_1,\dots,X_k$
and set $i=1$.
\begin{enumerate}
\item Increase $i$ by 1. Compute the consensus of $X_i$ with each prior product in the list. 
If a result $X$ is new, append it to the list.
\item If $X_i$ is not the last product, repeat starting with (1). 
If $X_i$ is the last product, stop.
\end{enumerate}
Then $Consensus(X_1,\dots, X_k)$ equals the set of products in the final list, 
\[X_1,\dots, X_k, X_{k+1}, \dots, X_n.\] 

The following Lemma follows from the consensus identity.
\begin{lemma}[Consensus Lemma]\label{consensusLemma} 
Given a Boolean function $f$ expressed as sum-of-products  $X_1 +\dots + X_k$,  then the sum of the products in $Consensus(X1,...,Xk)$ is logically equivalent to $f$, and every product in $Consensus(X1,...,Xk)$ is an implicant for $f$.
\end{lemma}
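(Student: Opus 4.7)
The plan is to prove both assertions simultaneously by induction on the steps of the Loose Iterated Consensus procedure, which was defined immediately before the lemma. Let $S_0 = X_1 + \dots + X_k$, and for each $i \ge 1$ let $S_i$ denote the sum of the products in the list after the $i$-th newly appended product $X_{k+i}$ has been added. The two claims to establish are (a) $S_i = f$ as Boolean functions for every $i$, and (b) every product appearing in any $S_i$ is an implicant of $f$. Once the procedure terminates, $Consensus(X_1,\dots,X_k)$ is exactly the set of products appearing in the final $S_n$, so (a) and (b) together give the lemma.

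For the base case $i=0$, (a) is the hypothesis $S_0 = X_1 + \dots + X_k = f$, and (b) holds because each $X_j$ is a summand of a sum-of-products equal to $f$, so whenever $X_j = 1$ the whole sum is $1$ and hence $f = 1$. For the inductive step, suppose $S_{i-1} = f$ and every product in $S_{i-1}$ is an implicant. The next appended term $X_{k+i}$ is, by construction, a valid consensus $(U \circ V)$ of two products $U, V$ already present in the list; validity means $U$ and $V$ share exactly one reversal variable, so the consensus identity
\[U + V \;=\; U + V + (U \circ V)\]
applies. Adding $(U \circ V)$ as a new summand to $S_{i-1}$ therefore leaves the function unchanged, giving $S_i = S_{i-1} = f$ and establishing (a). For (b), since $X_{k+i}$ is now a summand of the sum-of-products $S_i = f$, the same argument used in the base case shows that $X_{k+i} = 1$ forces $f = 1$, so $X_{k+i}$ is an implicant.

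The only place that requires some care is the invocation of the consensus identity in the inductive step: I need to make sure that "valid" in the Loose Iterated Consensus procedure really does correspond to the hypothesis of the identity displayed earlier in the section (namely that $U$ and $V$ have exactly one reversal variable, so that the cancellation that produces $U \circ V$ is the identity $x \cdot Y + x' \cdot Z = x \cdot Y + x' \cdot Z + Y \cdot Z$ with $Y, Z$ sharing no reversal variable). This is really just a definitional check rather than a substantive obstacle, since the procedure was defined to form a consensus only when it is valid. Aside from that bookkeeping, the proof is a clean two-line induction, and the lemma follows on termination of the procedure because the set of products in the final list is by Definition \ref{DefConsensus} exactly $Consensus(X_1,\dots,X_k)$.
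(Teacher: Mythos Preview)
Your proof is correct and is exactly the argument the paper has in mind: the paper's ``proof'' is simply the one-line remark preceding the lemma that it ``follows from the consensus identity,'' and your induction on the steps of the Loose Iterated Consensus procedure is the natural way to make that assertion precise. There is no substantive difference in approach---you have just spelled out the details the paper leaves implicit.
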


\begin{definition} Product $Y$ is \emph{covered} by \{ $X_1, \dots, X_k$ \} if $Y \in Consensus(X_1,\dots,X_k)$.
\end{definition} 

%%%%%% subsection  Computing the Complete Set of Primes %%%%%%
\subsection{Computing the Complete Set of Primes}
The classic method of finding the complete set of primes associated with a Boolean function 
that is defined by a sum-of-products formula 
is based on a theorem proved by Brown [\ref{Brown}]  and presented in more modern language in many textbooks. The wording here differs slightly from the usual language.

One of the most basic identities in Boolean Algebra is used when computing the complete set of primes. If $U$,  $V$, and $W$ are products and $V=U*W$, then 
$U + V = U + U*W = U$. A special case is $U + U = U$.

\begin{theorem}[Brown's Complete Set of Primes Theorem] A sum-of-products formula for $f$ is the sum of the complete set of primes for the function if and only if
\begin{enumerate}
\item No product's factors are a subset of  any other product's factors.
\item If the consensus of two products $Y$ and $Z$ in the sum exists,  $X=(Y \circ Z)$, then 
either $X=$ some product in the sum, or there is a product $R$ in the sum such that the factors in 
$R$ are a proper subset of the factors in $X.$ 
\end{enumerate}
\end{theorem}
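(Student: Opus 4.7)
The plan is to prove the biconditional in both directions. The forward (``only if'') direction is essentially a routine application of the definitions, while the backward (``if'') direction requires a substantive analysis of the structure of $Consensus(X_1,\ldots,X_k)$.

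For the forward direction, suppose $S=\{X_1,\ldots,X_k\}$ is the complete set of primes for $f$. Condition (1) holds because if the factors of one prime were a subset of those of another, the second would admit a shorter implicant obtained by deleting factors, contradicting its primality. For condition (2), let $Y,Z\in S$ with $X=(Y\circ Z)$ defined. By the Consensus Lemma, $X$ is an implicant for $f$; hence there is some prime $R$ whose factors are contained in those of $X$. Since $S$ is the complete set of primes, $R\in S$, and either $R=X$ (so $X$ itself appears in the sum) or the containment is proper, as required.

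For the backward direction, assume conditions (1) and (2) hold for $S$ with $\sum X_i=f$. My key claim is that every product generated during Loose Iterated Consensus on $S$ either is already in $S$ or has factors that properly contain the factors of some element of $S$. I would prove this by induction on the step at which the product is generated. The base case --- products arising from consensus of two original elements of $S$ --- is handled directly by condition (2). For the inductive step, suppose a generated product $W$ has factors containing those of some $X_i\in S$, and we form a valid consensus $W\circ W'$ on reversal variable $x$. The argument splits on whether the literal $x$ (or $x'$) that $W$ contributes to the reversal is also present in $X_i$: if it is, then $(X_i\circ W')$ is itself a valid consensus whose factor set is contained in that of $W\circ W'$, and condition (2) (or the inductive hypothesis, if $W'$ is also generated) together with this containment produces the required witness in $S$; otherwise $X_i$'s factors lie entirely in the non-reversal part of $W$, and hence are already directly contained in those of $W\circ W'$.

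Once the key claim is established, every factor-minimal element of $Consensus(S)$ lies in $S$, and condition (1) ensures that every element of $S$ is minimal among elements of $S$, hence (via the key claim) minimal in all of $Consensus(S)$. The conclusion then follows from the classical observation, provable from the Consensus Lemma applied to an arbitrary SOP for $f$, that the complete set of primes of $f$ is precisely the set of factor-minimal elements of $Consensus$ of any SOP for $f$. The main obstacle is the inductive step of the key claim: it requires careful bookkeeping of how factor-set containment interacts with the consensus operation, with the case split governed by where the reversal variable appears relative to the smaller witness product in $S$.
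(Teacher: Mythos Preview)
The paper does not actually prove Brown's theorem; it merely states the result and cites Brown's book as the source, then uses it as the foundation for the Iterated Consensus algorithm. So there is no in-paper proof to compare your attempt against.

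Your outline is essentially sound, but the inductive step of the key claim has a small hole. When the reversal literal of $W$ lies in $X_i$, you form $X_i\circ W'$ and appeal to ``condition (2) (or the inductive hypothesis, if $W'$ is also generated)''. Neither applies directly: condition (2) concerns only consensuses of two members of $S$, and the inductive hypothesis tells you that $W'$ contains the factors of some $X_j\in S$, not anything about $X_i\circ W'$ as such. The clean repair is to invoke the inductive hypothesis on \emph{both} $W$ and $W'$ up front, obtaining $X_i,X_j\in S$ with $\mathrm{factors}(X_i)\subseteq\mathrm{factors}(W)$ and $\mathrm{factors}(X_j)\subseteq\mathrm{factors}(W')$, then case-split on whether $x\in X_i$ and $x'\in X_j$. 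If either literal is absent, your Case-2 argument gives direct containment of that $X_i$ or $X_j$ in $W\circ W'$; if both are present, then $X_i\circ X_j$ is a valid consensus of two elements of $S$, condition (2) applies to it, and its factor set sits inside that of $W\circ W'$. With this fix your argument goes through. Your closing appeal to the classical completeness of iterated consensus (every prime of $f$ lies in $Consensus$ of any SOP for $f$) is legitimate, though that fact is itself nontrivial and, like Brown's theorem, is treated by the paper as background rather than proved.
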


The classic \emph{Iterated Consensus} algorithm for calculating the complete set of primes for a sum-of-products, is based on Brown's Theorem.
Note that the Iterated Consensus of $X_1,  \dots ,X_m$, here called \textbf{IT} $(X_1, \dots , X_m)$, differs from $Consensus(X_1,\dots ,X_m)$. When computing \textbf{IT}, if the factors of $X$ are a proper subset in any other product $Z$, product $Z$ is removed. 

\begin{definition}\label{completeSet}
Suppose that $f=X_1+\dots +X_m$ and there is no $X_i$ whose factors are a subset of another product's factors. 
The \emph{Iterated Consensus}, \textbf{IT} $(X_1, \dots ,X_m)$  
is defined as follows. Starting with $X_2$, compute the consensus of each product with each prior product. For each valid consensus result $X$,
\begin{enumerate}
\item If the factors in $X$ are a proper subset of the factors 
in any other product(s) on the list, remove the product(s). 
\item If the factors of any $X_k$ on the list are a subset of the factors of $X$, discard $X$.  
\item If no $X_k$ is a subset of $X$, append $X$ to the list.
\end{enumerate}
\end{definition}
The resulting sum satisfies the hypotheses of Brown's Theorem and hence the products are the complete set of primes for the function.\footnote
{Another method, which often is more efficient, is to calculate the primes by recursive use of Boole's Expansion Theorem. See [\ref{Hachtel}], page 139.}  
 
At every step in performing the Iterated Consensus, the sum of the products on the list remains equivalent to the Boolean function $f$ defined by the original sum-of-products. Thus the sum of the complete set of primes is equal to the original function $f$. The same set of primes will be generated from any sum-of-products that is equal to $f$. In particular, 
if the set of primes $ Q_1,\dots,Q_m$ is a basis for $f$, then
\[\mathbf{IT}(Q_1,..., Q_m)=\text{the complete set of primes for~} f.\]

%%%%%%  subsection   Loose Iterated Consensus of the Complete Set of  Primes  %%%%%%%
\subsection{ALL, the Consensus of the Complete Set of  Primes}
Once the complete set of primes of $f$ is known, 
the relationships between the primes are hidden within 
the relationships between \emph{all} of the products generated by computing consensus 
combinations of the primes.
For example, we may have primes $P,~\{Q_i\}$ with 
\[P=(~ (Q_1 \circ Q_2) \circ (Q_8 \circ Q_{10})~)\] 
where $X=(Q_1 \circ Q_2)$ and  $Y=(Q_8 \circ Q_{10})$ are not primes. Note that if $Q_1, Q_2, Q_8, \text{and }Q_{10}$ belonged to a minimum-cost basis, then $P$ could not
belong to the minimum-cost basis since it already is covered.

To capture all relationships, after finding the primes, let 
\[ALL~=~Consensus(\text{complete set of primes}).\]
That is, $All$ is generated by performing a Loose Iterated Consensus,
where only duplicate products are discarded.\footnote{More efficient
methods for calculating ALL can be devised, but this simple method
suffices for the purpose of defining the set.} 
The next lemma follows immediately.
\begin{lemma}\label{basisALL} 
Primes $P_1,\dots,P_k$ are a basis for $f$ if and only if 
\[Consensus(P_1,\dots,P_k)=ALL.\]
\end{lemma}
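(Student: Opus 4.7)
The plan is to prove the two implications separately, both leveraging the Consensus Lemma together with the fact, stated at the end of the previous subsection, that \textbf{IT} applied to any basis returns the complete set of primes. The key structural observation I will use throughout is that $ALL$ is by construction closed under valid consensus operations, so $Consensus(S)\subseteq ALL$ for any $S\subseteq ALL$; in particular, $Consensus$ is idempotent.

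For the forward direction, assume $P_1,\dots,P_k$ is a basis. The containment $Consensus(P_1,\dots,P_k)\subseteq ALL$ is the easy half: each $P_i$ is a prime and hence lies in $ALL$, and closure under consensus gives the inclusion immediately. For the reverse containment, I need the complete set of primes to be a subset of $Consensus(P_1,\dots,P_k)$. Here I would argue by induction on the order in which \textbf{IT} generates products: every product that \textbf{IT} ever places on its working list is either one of the original $P_i$'s or the consensus of two earlier products on that list, so by induction all of them are elements of $Consensus(P_1,\dots,P_k)$. The products on the final \textbf{IT} list are the complete set of primes by Brown's Theorem, so they too lie in $Consensus(P_1,\dots,P_k)$. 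Taking consensus closure of both sides and using idempotence then yields $ALL\subseteq Consensus(P_1,\dots,P_k)$.

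For the reverse direction, assume $Consensus(P_1,\dots,P_k)=ALL$. Two applications of the Consensus Lemma suffice: the sum of the products in $Consensus(P_1,\dots,P_k)$ is logically equivalent to $P_1+\dots+P_k$, while the sum of the products in $ALL=Consensus(\text{complete set of primes})$ is logically equivalent to the sum of the complete set of primes, which equals $f$. Equating these gives $P_1+\dots+P_k=f$, and since each $P_i$ is given to be a prime, $\{P_1,\dots,P_k\}$ is a basis.

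The main obstacle I anticipate is the inductive step in the forward direction: I must confirm that the discards performed by \textbf{IT} (which $Consensus$ never performs) do not invalidate the claim that every product \textbf{IT} produces is a member of $Consensus(P_1,\dots,P_k)$. Fortunately, the inductive hypothesis only asks for \emph{membership} in $Consensus(P_1,\dots,P_k)$, a set that retains every intermediate product including those \textbf{IT} later discards, so the argument goes through cleanly. The subtle point to state carefully is the distinction between ``currently on the \textbf{IT} list'' and ``ever produced by \textbf{IT}'' — only the latter is required for the containment into $Consensus(P_1,\dots,P_k)$.
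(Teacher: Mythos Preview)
Your proof is correct. The paper offers no explicit argument for this lemma, stating only that it ``follows immediately'' from the preceding material (the Consensus Lemma and the observation that $\mathbf{IT}$ applied to any basis returns the complete set of primes); your two-direction argument is exactly the detailed expansion of that remark, including the care you take with the $\mathbf{IT}$ discards, which is the one point a reader might stumble over.
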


As mentioned earlier, the  number of products that need to be analyzed 
and the number of relationships between 
products are diminished if some of the products are \emph{Essential}. 
Recall that product $Y$ is \emph{covered} by \{ $X_1, \dots, X_k$ \} 
if $Y \in Consensus(X_1,\dots,X_k)$.
\begin{definition} A prime $P$ is \emph{Essential} if $P$ is not covered by the other primes.
\end{definition}
All Essential Primes have to be part of any minimum-cost basis. 
Since their fixed positive cost is irrelevant to computing a minimum-cost basis,
in this paper they are called \emph{free}. 

The Essential Primes  can be calculated directly from any \emph{sum-of-primes}
 formula for $f$ using the following Theorem of Sasao [\ref{Sasao}].
\begin{theorem}
Suppose that function $f$ is expressed as a sum of primes,
\[P + V_1 + \dots V_n.\]
Then $P$ is an essential prime if and only if $P$ is not covered by
\[Consensus(P*V_1,\dots, P*V_n,~ (P\circ V_1), \dots,(P\circ V_n)~).\]
\end{theorem}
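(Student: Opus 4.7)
The plan is to reduce the claim to a single equivalence about consensus membership. Directly from the definition of \emph{Essential} applied to the representation $f = P + V_1 + \dots + V_n$, the prime $P$ is essential if and only if $P \notin Consensus(V_1,\dots,V_n)$. So the theorem reduces to proving
\[
P \in Consensus(V_1,\dots,V_n) \iff P \in Consensus(P*V_1,\dots,P*V_n,\,(P\circ V_1),\dots,(P\circ V_n)),
\]
with the understanding that undefined entries on the right (a $P*V_i$ that collapses to $0$ because $P$ and $V_i$ share a reversal, or a $(P\circ V_j)$ whose two operands have zero or more than one reversal variable) are silently omitted.

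For the $(\Leftarrow)$ direction, I would take a consensus derivation that produces $P$ from the right-hand list and rewrite each leaf in terms of the $V_i$'s. A $P*V_i$ leaf carries $P$'s factors as extra baggage, but since all leaves of the same form agree on those factors, no factor of $P$ is ever selected as a reversal variable along the derivation; stripping $P$'s factors uniformly from every product in the derivation converts it into a valid consensus derivation from $V_1,\dots,V_n$ that still terminates at $P$. A $(P\circ V_j)$ leaf can be unfolded via the consensus identity $P + V_j = P + V_j + (P\circ V_j)$, and any subsequent consensus step performed on $(P\circ V_j)$ can be mimicked using $V_j$ together with products already derivable from the other $V_i$'s, so the entire derivation can be reconstructed without reference to $P$.

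For the harder $(\Rightarrow)$ direction, I would introduce a shadowing map $\phi$ on products $W$ appearing in a given consensus derivation of $P$: set $\phi(W) = P*W$ when $P$ and $W$ have no reversal, $\phi(W) = (P\circ W)$ when they have exactly one reversal, and leave $\phi(W)$ undefined otherwise. The two facts to verify are that each $\phi(V_i)$ is an entry of the right-hand list, and that $\phi$ commutes with the consensus operation along the derivation, so that applying $\phi$ to the original derivation of $P$ produces a valid derivation of $\phi(P) = P$ from the right-hand list.

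The main obstacle, and where the case analysis becomes delicate, is verifying commutation of $\phi$ with consensus when the two operands fall into different regimes (one with and one without a reversal against $P$) or when the reversal variable of the consensus step itself lies in $P$'s factor set. Here one needs to show either that the consensus of the two $\phi$-images equals $\phi$ of the original consensus directly, or that the step can be replaced by auxiliary consensus operations among existing right-hand entries that reach the same target. Branches of the derivation on which $\phi$ becomes undefined (two or more accumulated $P$-reversals) yield products lying strictly outside $P$'s cube and therefore cannot participate in a final step producing $P$, so such branches can be pruned. Carrying this bookkeeping through to completion delivers the forward implication and closes the theorem.
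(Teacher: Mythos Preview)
The paper does not prove this theorem; it is quoted as a result of Sasao with a citation and no argument is given. There is therefore nothing on the paper's side to compare your proposal against.

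Your proposal, however, contains a genuine error at the very first step. You assert that ``directly from the definition'' $P$ is essential if and only if $P\notin Consensus(V_1,\dots,V_n)$. Only the forward direction holds. The paper defines $P$ to be essential when $P$ is not covered by \emph{all} the other primes of $f$, not merely by the particular primes $V_1,\dots,V_n$ appearing in one chosen representation, and the two conditions are not equivalent. For a concrete three-variable counterexample, let $f$ have off-set $\{010,101\}$ and take the sum-of-primes representation $f = x'y' + yz + xz'$. Here $P=x'y'$ is \emph{not} essential (its minterms $000$ and $001$ lie in the primes $y'z'$ and $x'z$ respectively), yet $Consensus(yz,xz')=\{yz,\,xz',\,xy\}$ does not contain $x'y'$. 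Sasao's list, by contrast, is $\{(P\circ yz),(P\circ xz')\}=\{x'z,\,y'z'\}$ (both products $P*V_i$ vanish), and $Consensus(x'z,y'z')$ does contain $x'y'$, correctly certifying non-essentiality. So the biconditional you set out to prove is false, and the entire strategy built on it collapses. The point of Sasao's construction is precisely to supply the neighbouring cubes $(P\circ V_j)$ that a bare $Consensus(V_1,\dots,V_n)$ can lack; your reduction erases the content of the theorem.
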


%%% subsection  Free Products, Non-free Products, and Spans  %%%%%%%
\subsection{Free Products, Non-Free Products, and Spans}
The products generated by 
computing $Consensus(Essential~Primes)$ are covered by the Essentials,
and hence also can be viewed as \emph{free} products. 
The set of all free products is denoted by $<Free>$. Thus,
\[<Free>=Consensus(Essential~Primes).\]

For any  positive additive cost function $C$, 
the Essential Primes are a minimum-cost cover for $<Free>$.
The set of free primes that are not Essential cannot belong to any minimum-cost basis
and hence belong to the set of $Unnecessary~Primes$.

\begin{definition} Every prime that is not Essential is called a non-Essential prime. 
A non-Essential prime that is not free is called a non-free prime. 
\end{definition}

If all primes are free, the problem is solved because the unique minimum-cost  basis for any positive additive cost function $C$ consists of  the Essential Primes. From now on, we assume that there are some non-free primes. The set of products of interest is
\[NONFREE~= \text{the non-free products  in}~ ALL\]
We will order and number $NONFREE$ with the non-free primes first.
It is convenient to number the non-free primes $\{P_k\}$ in order of increasing cost. (This will make it trivial to select a prime with the least cost from a set of equivalent primes.)

\begin{definition} The set of non-free primes that turn out to be \emph{Unnecessary}
will be called $Surplus$.
\end{definition}

\begin{definition}\label{spanDef} 
The \emph{Span} of a set of non-free products $S$ is defined by 
\[Span(S)~=~\text{the non-free products in } Consensus(<Free>,~S).\]
\end{definition}

\begin{definition}
Given a set of non-free products $S$, 
non-free product $X$ is \emph{covered} by $Span(S)$
if $X\in Span(S)$.
\end{definition}

\begin{definition}
A set of non-free primes $R_1,\dots, R_g$ for $f$ is a \emph{cover} for the
non-free primes of $f$ if 
\[Span(R_1,\dots,R_g)=NONFREE\]
\end{definition}

Note that if  $Span(R_1,\dots, R_g)$ covers the non-free primes of $f$, then
\[Essential~Primes,~R_1,\dots, R_g\]
is a basis for $f$. 

Clearly, any minimum-cost basis must consist of the Essential Primes and a minimum-cost cover,  $Q_1,\dots,Q_k$, for the non-free primes. 
 
 \begin{lemma} \label{primeCovered}
Let $P_1, \dots ,P_n$ be the non-free primes. Then each non-free $P_k$ 
is covered by the span of the other non-free  primes. 
\end{lemma}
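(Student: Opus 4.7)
The plan is to show that $P_k$ lies in $Consensus(<Free>,\,P_1,\dots,\widehat{P_k},\dots,P_n)$; since $P_k$ is non-free by hypothesis, membership in this consensus closure automatically places $P_k$ in $Span(P_1,\dots,\widehat{P_k},\dots,P_n)$ by Definition \ref{spanDef}.

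First I would invoke the definition of \emph{Essential}: because $P_k$ is non-free, in particular $P_k$ is non-Essential, so $P_k$ is covered by the remaining primes, i.e. $P_k \in Consensus(E_1,\dots,E_m,\,P_1,\dots,\widehat{P_k},\dots,P_n)$, where $E_1,\dots,E_m$ are the Essential Primes. Next I would partition the non-Essential primes other than $P_k$ into the \emph{Unnecessary} primes (the non-Essential free primes) and the other non-free primes. Each Unnecessary prime, being free, already lies in $<Free>=Consensus(E_1,\dots,E_m)$ by the very definition of free.

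The key technical step is the closure property of the $Consensus$ operator: if $Y \in Consensus(X_1,\dots,X_k)$, then $Consensus(X_1,\dots,X_k,Y)=Consensus(X_1,\dots,X_k)$, because every valid consensus step available after adjoining $Y$ was already available before. This follows directly from the Loose Iterated Consensus procedure — appending an already-derivable product produces only duplicates. Applying this twice, I can first absorb the Unnecessary primes into $Consensus(E_1,\dots,E_m)$, and then absorb $Consensus(E_1,\dots,E_m)$ itself into the single symbol $<Free>$, obtaining
\[Consensus(E_1,\dots,E_m,\,P_1,\dots,\widehat{P_k},\dots,P_n)\ =\ Consensus(<Free>,\,P_1,\dots,\widehat{P_k},\dots,P_n).\]
Combined with the first step, this gives $P_k$ in the right-hand side, and since $P_k$ is non-free, $P_k \in Span(P_1,\dots,\widehat{P_k},\dots,P_n)$, completing the proof.

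The main (minor) obstacle is making the closure identity $Consensus(Consensus(A),B) = Consensus(A,B)$ explicit; everything else is just unwinding definitions. One could alternatively prove the lemma by appealing to Lemma \ref{basisALL} — observing that the complete set of primes minus $P_k$ still forms a basis (since $P_k$ is non-Essential and thus redundant in any sum-of-primes equal to $f$), so its $Consensus$ equals $ALL$, which contains $P_k$ — but the direct argument above is shorter and avoids invoking that $ALL$ contains every prime.
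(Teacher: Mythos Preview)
Your proof is correct and follows essentially the same idea as the paper's: the paper argues by contrapositive in one sentence (``if $P_k$ were not in the span of the other non-free primes it would be Essential or free''), while you unwind this directly and make explicit the closure step $Consensus(E_1,\dots,E_m,\text{Unnecessary},\dots)=Consensus(\langle Free\rangle,\dots)$ that the paper leaves implicit. The underlying argument---non-Essential means covered by the remaining primes, and the free ones among those can be absorbed into $\langle Free\rangle$---is identical.
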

\proof
If $P_k$ was not covered by the span of the other non-free primes it would have to be an Essential or other free prime, contradicting the assumption that it is non-free. 
\endproof

%%%%%%  subsection  The triples(f) Table  %%%%%%%
\section{The $triples(f)$ Table}
Consensus relationships are used to build $triples(f)$, a table that includes
all useful covering relationships 
and is the starting point of all calculations. 

The Essential Primes must be in any basis, so we assume that they have been
chosen as the initial basis members  
and hence all free products are already covered.

A table entry, $(r,i,j)$ corresponds to a consensus relationship, $X_r=(X_i\circ X_j)$.
This relationship states that if $X_i$ and $X_j$ are covered, then $X_r$ is covered.
Note that:
\begin{enumerate}
\item When $X_r$ is free, $X_r$ already is covered by the Essentials
so the relationship is irrelevant and is not included.  
\item If both $X_i$ and $X_j$ are free, then $X_r$ is free, so the relationship is not included.
\item If $X_r$ and $X_i$ are non-free and $X_j$ is free, then, since $X_j$ already is covered, 
$X_r$ is covered whenever $X_i$ is covered. This is tabulated as $(r,i,0)$.\footnote{When 
taking the consensus of a non-free with a free, by convention, the free product 
will always be assumed to occupy the $X_j$ position.} 
Thus, all free $X_j$ products will be represented by 0s in the table,
and all product indices in a table represent non-free products.
\item If $X_r, X_i$, and $X_j$ all are non-free, then the entry is tabulated as $(r,i,j)$.
\end{enumerate}

The $(r,i,0)$ entries are called $triples0$ entries
and the $(r,i,j)$ entries are called $triples3$ entries.

In addition to the $triples0$ entries generated by consensus operations,
there is a second set of valuable $triples0$ cover relationships.
By definition, non-free prime $P_k$ covers every product in $Span(P_k)$. 
When $Span(P_k)$ is maximal (see Definition \ref{maxSpan}) and closed (see Definition \ref{closedSpan}, 
then for each $X_r\in Span(P_k)$,
we append $(r,k,0)$ to $triples0$.
 
The calculations will produce many duplicate $(r,i,0)$ results. 
Efficient code can discard duplicates as they arise 
(for example, by using a sorted list and hashed list insertion).

It is convenient to refer to products by their indices. Thus, we can say that  
an $(r,i,j)$ entry states that  if $i$ and $j$ are covered then $r$ is covered.

As a rule of thumb, a greater number of $(r,i,j),~j>0)$ entries lead to slower execution, while
more $(r,i,0)$ entries lead to faster execution.
Fortunately, when there are free products, 
some $(r,i,j), j>0$ entries are \emph{useless} and can be discarded.

Specifically, if there is an $(r, i,0)$ entry, 
 $i$ covered implies $r$ covered; thus, stating `$i \text{ AND } j$ covered 
 imply $r$ covered' is redundant and any such $(r,i,j)$ entry can be discarded. 
 The same argument holds for $(r,i,j)$ if there
 is an $(r,j,0)$ entry.  An $(r,i,j)$ entry that is not useless is called \emph{useful}.

The $triples(f)$ table consists of the union of the
remaining entries in $triples0$ and $triples3$. 
The algorithm in Appendix \ref{reorder} arranges the products in 
$NONFREE$ in a favorable order and the algorithm in  
Appendix \ref{genTable} can be used to generate $triples$. 
These algorithms are just examples, 
and can be used for problems of small size. 
More efficient algorithms are required
for big problems. 

Note that by the definition of $Span$, the set of non-free primes $R_1,\dots, R_g$ 
covers the non-free primes if and only if $Span(R_1,\dots, R_g)$ covers every index
in $triples(f)$.

%%%%%% subsection  Inactive Products    %%%%%%
\subsection{Inactive Products}
An $r$-index that does not appear anywhere as an $i$ or $j$ does not cover anything, and is called \emph{inactive}. After removing the entries for inactive $r$-indices, others may become inactive.  An iteration clears out these entries, (including inactive primes) leaving only active indices. 
An inactive prime cannot be part of any minimum-cost basis, and belongs to $Surplus$.

From now on, we assume that useless and inactive entries have been 
removed from $triples(f)$. Let $triples$ denote the resulting table of useful, active products.
Once the initial $triples$ table has been defined, no further calculations of 
consensus products are needed. All needed information is in $triples$. 

%%%%%% subsection Cascades Relative to a Table  %%%%%%%
\subsection{Cascades Relative to a Table}
Given a set of products $S$ and a $triples$ table $T$, the operation $Cascade(S,T)$  plays a key role in the algorithms used to discover the  $PS_i$.
The purpose of $Cascade(S, T)$ is to
\begin{itemize}
\item[] generate the list $\{S, \text{ all other products in } T \text{ covered by }S\}$,
\item[] remove all indices in $S$ or covered by $S$ from $T$.
\end{itemize}
When the identity of the table $T$ is clear from the current context,
we can write $Cascade(S)$ instead of $Cascade(S,T)$.

The cascade process is iterative. If $S$ covers additional indices $S1$
and $S\cup S1$ covers additional indices $S2$, then $S$ covers $S\cup S1 \cup S2$.
Thus, an initial cascade list grows through iteration until no more new indices are found.
To cascade set $S$ through the given table: 
\begin{enumerate}
\item Set the initial cascade list to $S$.
\item For each $s$ in the cascade list, remove all $(s,x,y)$ entries from the table.
\item For the remaining table entries, 
for each index in an $i$ or $j$ position that belongs to the cascade list, replace the index with $0$. 
\item A $(t,0,0)$ entry indicates that $t$ is covered. Append $t$ to the
cascade list.
\item Iterate for the set of new indices in the cascade list,  until no new $(t,0,0)$ index is found. 
\end{enumerate} 
Figure \ref{FigCascade} illustrates substitutions for $S=\{32\}$
into the table in the figure.
After replacing 32 with 0, indices 25, 26, 27, and 30 are added to the cascade list.  
 \paragraph{}
\begin{figure}
\begin{center}
\begin{tabular}{l l l l l}
\hline
\small
$\mathbf{(~24,  67,  ~0)}$ & $~$\\
( 25,  32,  ~0) & $\Longrightarrow$ (25, ~0, ~0) & \\
( 26,  32,  ~0) & $\Longrightarrow $ (26, ~0, ~0) &\\
( 27,  32,  ~0) & $\Longrightarrow$  (27, ~0, ~0) & \\
( 30,  32,  ~0) & $\Longrightarrow$  (30, ~0, ~0) & \\
( 30,  35,  37) &  &$ \Longrightarrow$ \emph{deleted!}\\
( 31,  25,  37) &  &$ \Longrightarrow$ (31,~0,~37) &$ \Longrightarrow$ (31,~0,~0)&\\
( 32,  27,  31) &  $\Longrightarrow$  \emph{deleted!}\\
$\mathbf{(~33,  24,  39)}$\\
( 35,  27,  ~0)&  &$ \Longrightarrow$ (35,~0,~0) \\
( 35,  30,  ~0)&  &$ \Longrightarrow$ (35,~0,~0)\\
( 37,  30,  ~0)&  &$ \Longrightarrow$ (37,~0,~0)\\
( 38,  35,  40)&  &                     &$ \Longrightarrow$ (38, ~0, 40)=$\mathbf{(~38, 40, ~0)}$\\
\hline
\end{tabular}
\end{center}
\caption{Computing Cascade(32). $(r,0,0)$ indicates that $r$ is covered.\label{FigCascade}}
\end{figure}
\normalsize
Then these new values need to be replaced with $0's$ in the table. Iterations continue until no further changes to the list occur. In this example, the final cascade list consists of 
$32$ and $25, 26, 27, 30, 31, 35,\text{ and }37$, and the
 final table (see Figure \ref{CascadeResult}) has only three entries. 
 
\begin{figure}
\begin{center}
\begin{tabular}{l}
\hline
( 24,  67,  ~0)\\
( 33,  24,  39)\\
( 38,  40, ~0)\\
\hline
\end{tabular}
\end{center}
\caption{The result after cascading 32.\label{CascadeResult}}
\end{figure}
\normalsize
 
 %%%%%% subsection Spans Relative to a Table  %%%%%%%
 \subsection{Spans Relative to a Table}
 Earlier, $Span(S)$ was defined to be the set of all non-free products in 
 $Consensus(<Free>, S)$, in other words, the non-free products that
 are covered by $S$.
 From now on, spans will be calculated relative to whatever local
 $triples$ table $T$ is under study. 
 \begin{definition}\label{span2}
$Span(S, T)$, the span of a set $S$ relative to $triples$ table $T$,  
is equal to the union of $S$ with the set of products in $T$ that are covered by $S$. 
That is, the span is the cascade list that results from cascading $S$ through a copy of $T$.
When the relevant table $T$ is clear from the context, the span can be written $Span(S)$.   
\end{definition}

 The span of an individual prime often provides useful information
 when trying to calculate a minimum-cost cover $QS_i\subset PS_i$. 

%%%%%%%% section{Ancestor Sets}  %%%%%%

\section{Ancestor Sets}
In this section and the ones that follow, all products are active and non-free. $P$, $Q$, and $R$ will represent active non-free primes. $X$, $Y$, or $Z$ can represent any active product in $NONFREE$, including primes.  

At this point, we are ready to define and compute the Ancestor Sets associated with a $triples$ table $T$. 
\begin{definition} Given $triples$ table $T$, for any triple $(r,i,0)$ $i$ is a \emph{parent} of $r$.  For $(r,i,j),~j>0$, $i$ and $j$ are \emph{parents} of $r$.
\end{definition}

\begin{definition} The Ancestor Set of non-free product $X_r$, $Anc(X_r)$, (also written $Anc(r)$)  consists of the union of  all of the parents of $r$ with the parents of parents,  parents of parents of parents, ... iterated until there no new ancestors are added.
\end{definition}

A $0/1$ bitmap can be used to represent the parents of each non-free product. 
A simple (but inefficient) way
to find $Anc(r)$ is to set the initial $A(r)$=$parents(r)$ and then let 
\[A(r)=A(r) \text{ OR } parents(each~parent).\]
Then, for each new member of $A(r)$, set 
\[A(r)=A(r) \text{ OR } parents(each~new~member)\]
until no new ancestors are added.

\begin{definition} The Ancestor Set of a non-free prime is called a \emph{Prime Ancestor Set}.
\end{definition}

Later, it will be shown that only Prime Ancestor Sets need to be calculated, and in fact,
there are shortcuts that identify a small subset of the primes
whose ancestor sets actually need to be calculated.
The algorithms for computing ancestor sets make use of the following obvious lemma.

\begin{lemma} If $Y \in Anc(X)$ then $Anc(Y) \subset Anc(X)$.
\end{lemma}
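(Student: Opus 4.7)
The plan is to exploit the closure property that is implicit in the iterative definition of $Anc(X)$: the construction keeps expanding by parents until it stabilizes, so at termination, every element of $Anc(X)$ has \emph{all} of its parents already in $Anc(X)$. Once this closure property is stated, the lemma follows by a short induction along any parent-chain witnessing $Z \in Anc(Y)$.

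First I would record the closure observation as a preliminary: for every $W \in Anc(X)$, $parents(W) \subset Anc(X)$. This is immediate from the definition, since if some parent $W'$ of $W$ were missing from $Anc(X)$, then one more iteration of the expansion step would add $W'$, contradicting the fact that the iteration terminated (``no new ancestors are added''). Applying this repeatedly, $Anc(X)$ is closed under the ``take all parents'' operation.

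Next I would unpack what $Z \in Anc(Y)$ means. By the definition, there is a finite chain
\[
Z = W_0,\; W_1,\; \ldots,\; W_k = Y
\]
with $W_i \in parents(W_{i+1})$ for each $i = 0,\ldots,k-1$. I would then induct downwards from $k$ to $0$ (or equivalently upwards on $k-i$) to show $W_i \in Anc(X)$ for every $i$. The base case $W_k = Y \in Anc(X)$ is the hypothesis. For the inductive step, assuming $W_{i+1} \in Anc(X)$, the closure property gives $parents(W_{i+1}) \subset Anc(X)$, and since $W_i \in parents(W_{i+1})$ we get $W_i \in Anc(X)$. In particular $Z = W_0 \in Anc(X)$, so $Anc(Y) \subset Anc(X)$.

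The only mildly delicate point is that the definition of $Anc(\cdot)$ is presented operationally rather than as an explicit least fixed point, so I want to be careful that the closure step is genuinely justified by ``iterated until there are no new ancestors'' rather than by circular reasoning. Apart from that, the proof is routine and short; I would expect it to occupy only a few lines in the paper.
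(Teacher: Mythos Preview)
Your proof is correct. The paper itself does not give a proof of this lemma at all; it simply calls it ``obvious'' and moves on, so there is nothing to compare against beyond noting that your closure-plus-chain argument is exactly the sort of routine verification the author had in mind.
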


\begin{definition} An Ancestor Set is \emph{Independent} if it does not properly 
contain any other Ancestor Set.
\end{definition}
The Independent Ancestor Sets are the critical ones. Lemma \ref{indIsPrime} will show that 
every Independent Ancestor Set is a Prime Ancestor Set.

\begin{definition}
For Ancestor Set $A$, $triples(A)$ is defined to be the set of all triples $(r,i,j)$ in $T$ 
such that $r$, $i$, and $j$, if $j>0$, belong to $A$. 
\end{definition}

\begin{lemma} Every non-free product has some ancestors that are non-free primes.
\end{lemma}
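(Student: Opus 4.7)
The plan is to walk backward through the parent relation and show that such a walk must eventually reach a non-free prime. The key tool is a well-founded ordering coming from the Loose Iterated Consensus procedure of Section \ref{consensusID}: assign each non-free prime the birth index $0$, and assign every other product in $NONFREE$ the step at which it is first produced as a consensus. By construction, a product with positive birth index $k$ is the consensus of two products already in the list, both of birth index strictly less than $k$; the corresponding triple $(r,i,j)$ (or $(r,i,0)$ when the second operand is free) appears in the initial $triples$ table.

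Before running the main argument I would verify that $Anc(X_r)$ is nonempty whenever $X_r$ is a non-free product. For a non-prime $X_r$ this is immediate from the derivation above. For a non-free prime $X_r$ it follows from Lemma \ref{primeCovered}: since $X_r$ is covered by the span of the other non-free primes, and by Definition \ref{span2} that span is computed by cascading through $triples$, some triple with $r$ in the first coordinate must lie in $triples$, so $X_r$ has at least one parent in the table.

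Now I would argue by contradiction, assuming no element of $Anc(X_r)$ is a non-free prime. Pick $Y \in Anc(X_r)$ of smallest birth index; by hypothesis $Y$ is not a prime, so its birth index $k$ is strictly positive. The first-derivation triple of $Y$ lies in $triples$ and exhibits $Y$ as the consensus of products of birth index less than $k$, so $Y$ has a non-free parent of smaller birth index. That parent lies in $Anc(X_r)$ because ancestor sets are closed under taking parents, which contradicts the minimality of $Y$. Hence $Anc(X_r)$ must contain a non-free prime.

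The main obstacle I anticipate is justifying that the first-derivation triple for every active non-free non-prime actually survives the cleanup of useless and inactive entries, so that the birth-index descent can be performed inside the trimmed table. This should follow because the cleanup only discards triples whose covering information is already subsumed by a stronger $(r,i,0)$ entry, so every active non-free non-prime retains at least one triple in which it occupies the $r$-position with a parent of strictly smaller birth index; verifying this claim in detail is the one bookkeeping step that genuinely needs the table-construction conventions spelled out earlier in the section.
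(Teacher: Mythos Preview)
Your proposal is correct and follows the same idea as the paper's proof---trace the consensus derivation back until a non-free prime is reached---though you make it far more explicit with the birth-index well-ordering, whereas the paper dispatches the non-prime case in a single clause (``it follows from the fact that all of the non-free products were generated by $Consensus(\langle Free\rangle,\text{non-free primes})$'') and handles non-free primes via Lemma~\ref{primeCovered} exactly as you do.

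On the obstacle you flag: you do not actually need the \emph{specific} first-derivation triple to survive cleanup. If $(r,i,j)$ is discarded as useless, that is by definition because some $(r,i,0)$ or $(r,j,0)$ is already present, so one of $i,j$---each of strictly smaller birth index than $r$, since both occurred in the first derivation---remains a parent of $r$ in the trimmed table. Moreover, any parent of an active index occupies an $i$- or $j$-slot and is therefore itself active, so inactive-entry removal cannot sever the descent either. With that observation the bookkeeping closes and your minimal-birth-index contradiction goes through unchanged.
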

\begin{proof} 
For the non-free primes, this follows directly from Lemma \ref{primeCovered}. For other non-free products, it follows from the fact that all of the non-free products were generated 
by $Consensus(<Free>, \text{ non-free } primes)$. \end{proof}

\begin{definition} Given an Ancestor Set $A$, the \emph{primeset} for $A$ 
is the set of all primes that belong to $A$. 
\end{definition}

\begin{definition} Primes $Q_1, \dots ,Q_t$ cover $A$ if 
$A$ is equal to the span of $Q_1, \dots ,Q_t$ in $triples(A)$.
\end{definition}

\begin{lemma} Suppose that $A$ is an Ancestor Set and $PS=\{P_1, \dots ,P_m\}$ 
is its primeset.  Then $PS$ covers $A$ with respect to $triples(A)$, that is,
\[A = Span( P_1, \dots ,P_m)~in~ triples(A).\]
\end{lemma}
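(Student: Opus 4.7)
The plan is to prove the two inclusions $Span(P_1, \dots, P_m) \subseteq A$ and $A \subseteq Span(P_1, \dots, P_m)$, both taken relative to $triples(A)$, separately. The forward inclusion is immediate: by the definition of $triples(A)$, every index appearing in that table lies in $A$; since $PS \subseteq A$ is the initial seed of the cascade, no index outside $A$ can ever be added.

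For the reverse inclusion, I would first establish a structural observation that will make the induction go through: if $X \in A$ and $(X,i,j)$ is any triple of the full table $T$ whose first coordinate is $X$, then $i$ and $j$ (whenever they are nonzero) also belong to $A$, so $(X,i,j) \in triples(A)$. This follows because any nonzero index in the second or third slot of such a triple is by definition a parent of $X$, hence a member of $Anc(X)$; combined with the earlier lemma that $Anc(Y) \subset Anc(X)$ whenever $Y \in Anc(X)$, together with the fact that $A = Anc(r_0)$ for some non-free product $r_0$, we obtain $Anc(X) \subset A$. I would then argue by well-founded induction on the parent relation restricted to $A$. In the base case, any prime $X \in A$ lies in $PS$ and therefore is in the initial cascade list. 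For the inductive step, let $X \in A$ be a non-prime; since $X$ is a non-free non-prime, the iterated consensus construction of $NONFREE$ produces $X$ as a valid consensus $X_i \circ X_j$ of earlier products, yielding a triple $(X,i,j) \in T$ (with $j=0$ if $X_j$ is free). By the structural observation this triple lies in $triples(A)$, and the inductive hypothesis gives $X_i, X_j \in Span(PS, triples(A))$; cascading through the triple then adds $X$ to the cascade list, so $X \in Span(PS, triples(A))$.

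The main obstacle I expect is justifying well-foundedness of the parent relation on $A$, so that the induction actually terminates. This rests on the fact that $NONFREE$ is enumerated by iterated consensus in an order in which every consensus-derived parent has strictly smaller index than its child, giving the parent relation a DAG structure. One must additionally check that the span-derived entries $(r,k,0)$, appended when $X_r \in Span(P_k)$, respect this ordering: such an $X_r$, when distinct from $P_k$, is generated from $P_k$ together with the essentials by further consensus steps and therefore appears later in the enumeration. Once this is settled, combining the two inclusions gives $A = Span(P_1, \dots, P_m)$ in $triples(A)$, as required.
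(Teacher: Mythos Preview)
Your approach---tracing each $X \in A$ back through its consensus derivation to non-free primes and observing that those primes must lie in $PS$---is essentially the paper's own argument, carried out with more care about the structural observation and about staying inside $triples(A)$; the paper's proof is a terse two-sentence version of the same idea.

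One correction is needed in your well-foundedness discussion. The full parent relation on $A$ is \emph{not} a DAG: during Loose Iterated Consensus, computing $X_i \circ X_j$ may yield a product already on the list at some index $r < i$, producing a triple $(r,i,j)$ whose parent has larger index than its child; and the span-derived entries $(r,k,0)$ for equivalent primes can create genuine cycles. So your claim that ``every consensus-derived parent has strictly smaller index than its child'' is false. What your inductive step actually uses---and what is correct---is the weaker fact that every non-prime $X_r$ possesses \emph{at least one} triple with both (nonzero) parents of strictly smaller index, namely the consensus from which $X_r$ was first appended to the list (or its $(r,i,0)$ replacement with the same $i$, if that triple was discarded as useless). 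Frame the argument as induction on the enumeration index rather than as well-founded induction on the parent relation, and it goes through.
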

\begin{proof}
Every non-free product $X$ is generated by consensus combinations that 
include non-free primes, (as well as free primes that are represented by
0s in the table). $P_1, \dots ,P_m$ are the only primes in $A$ and hence are the only 
non-free prime ancestors of any $X \in A$. Hence $A$ must be contained in their span.
\end{proof}

\begin{lemma}
Primes $QS=Q_1, \dots ,Q_t$ cover $A$ if and only if they cover the
primeset $PS$ of $A$.
\end{lemma}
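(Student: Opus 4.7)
The plan is to treat this as an almost immediate consequence of the previous lemma together with the transitivity of the span (cascade) operation on a fixed table $triples(A)$.

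For the forward direction, I would just note that by definition the primeset $PS$ is contained in $A$. So if $QS$ covers $A$, meaning $A = Span(QS)$ in $triples(A)$, then automatically $PS \subset Span(QS)$, which is precisely the statement that $QS$ covers $PS$. This direction is one line and requires no work.

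For the reverse direction, assume $PS \subset Span(QS)$ in $triples(A)$. By the previous lemma, $A = Span(PS)$ in $triples(A)$. I would prove $A \subset Span(QS)$ by induction on the step at which an index $r \in A$ first enters the cascade list when cascading $PS$ through $triples(A)$. The base case is $r \in PS$, which is in $Span(QS)$ by hypothesis. For the inductive step, $r$ must have been added via a triple $(r,i,0)$ or $(r,i,j)$ in $triples(A)$ whose parents $i$ (and $j$) had already entered the cascade list at earlier steps, so by the inductive hypothesis they lie in $Span(QS)$; since the very same triple sits in $triples(A)$ and is available during the cascade of $QS$, the index $r$ will eventually be produced there as well. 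Together with $Span(QS) \subset A$ (because cascading $QS$ through $triples(A)$ cannot produce any index outside $A$, as all entries of $triples(A)$ have their $r$-index in $A$), this gives $Span(QS) = A$.

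The main obstacle is being careful about the bookkeeping of the cascade procedure, since the algorithm as described destructively modifies the table. I would either work with a fresh copy of $triples(A)$ for each cascade (as the definition of $Span$ via Definition \ref{span2} tacitly allows), or rephrase the argument in terms of the monotone closure: every triple used in any derivation of $r$ from $PS$ is still present in a copy of $triples(A)$ used to cascade $QS$, so derivations compose. Once that is observed, the transitivity $Span(Span(QS)) = Span(QS)$ in a fixed table yields $A = Span(PS) \subset Span(Span(QS)) = Span(QS)$, completing the proof.
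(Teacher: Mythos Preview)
Your proposal is correct and follows essentially the same approach as the paper: both directions rely on $PS\subset A$ for the forward implication and on the previous lemma $A=Span(PS)$ together with transitivity of $Span$ for the converse. The paper's proof is a two-line sketch that leaves the transitivity step implicit, whereas you unpack it via an explicit induction on cascade steps; this is added rigor rather than a different argument.
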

\proof
Since by definition, the primeset $PS$ for $A$ is contained in $A$, 
if $QS$ is a cover for $A$ it must cover $PS$. Conversely, if $QS$ covers $PS$,
the span of $QS$ must contain $A$.
\endproof

\begin{definition} An Ancestor Set $A(Y)$ is closed with respect to $Y$ if $Y \in A(Y)$.
\end{definition}

%%%%%   subsection    Statement and Proof of the Ancestor Theorem   %%%%%%
\subsection{Statement and Proof of the Ancestor Theorem}
\label{secAnc}
The  \emph{Ancestor Theorem} is a crucial result for this paper. The statement is true for any 
Ancestor Set $Anc(X)$, although it will only need to be applied to Independent Ancestor Sets.

The key fact is that every minimum-cost cover for an Ancestor Set  $A$ has to be chosen from the primeset for $A$, since all other primes are irrelevant to covering $A$.

\begin{theorem}[Ancestor Theorem]\label{ancThm}
Let $A$ be an 
Ancestor Set, $PS$ be its primeset, and $RS$ be the set of primes that are not 
covered by the union of the Essential Primes with $PS$. 
Every minimum-cost basis for the primes 
consists of the Essential Primes, a minimum-cost subset of $PS$ that covers  
$PS$, and a minimum-cost subset of $RS$ that covers $RS$.
\end{theorem}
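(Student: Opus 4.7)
The plan is to reduce the minimization over all bases to two independent minimizations --- one over $PS$ and one over $RS$ --- and then invoke additivity of $C$. Since the Essential Primes must appear in every basis, I fix them and analyze how the remainder of a minimum-cost basis $\mathcal{B}$ splits between primes inside and primes outside $PS$.

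The heart of the argument is the following lemma: if a set of primes $Q$ covers a product $Y \in A$, then $Q \cap PS$ already covers $Y$. I would prove this by induction on the cascade depth at which $Y$ is marked covered. The key observation is that for any triple $(r,i,j)$ with $r \in A$, both parents $i$ and $j$ (when non-zero) are ancestors of $r$, hence elements of $A$, by the stated lemma $Anc(Y) \subset Anc(X)$ whenever $Y \in Anc(X)$. Consequently every triple that fires in the sub-cascade certifying $Y$ covered lies inside $triples(A)$, and the prime leaves of that sub-cascade lie in $PS$. An immediate corollary is that $\mathcal{B} \cap PS$ must already cover $PS$ (equivalently, cover $A$, via the earlier equivalence), since primes outside $PS$ make no contribution to covering anything in $A$.

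The theorem now follows from two exchange moves. First, if $\mathcal{B} \cap PS$ were not a minimum-cost cover of $PS$ within $PS$, replace it by a cheaper $Q' \subset PS$ that still covers $PS$. Because any cover of $PS$ automatically covers $Span(PS)$ (spans are closed under further consensus), the primes in $\mathcal{B}$ outside $PS$ continue to cover whatever they covered before, yielding a strictly cheaper basis and contradicting minimality of $\mathcal{B}$. Second, a minimum-cost basis contains no redundant prime, so every prime in $\mathcal{B}$ lying outside the Essential Primes and outside $PS$ must be uncovered by the Essential Primes together with $PS$, i.e.\ must lie in $RS$; and $\mathcal{B} \cap RS$ must cover $RS$ because every prime of $RS$ is otherwise uncovered. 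The same exchange argument, now applied inside $RS$, shows $\mathcal{B} \cap RS$ is a minimum-cost cover of $RS$ within $RS$. Conversely, combining the Essential Primes with any minimum-cost $PS$-cover and any minimum-cost $RS$-cover produces a basis whose cost, by additivity, equals the global minimum.

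The main obstacle I expect is making the ``covering stays inside $A$'' lemma precise --- it is tempting but subtly false to claim that the global cascade of a set $Q$ never visits indices outside $A$. The correct statement is that for each individual $Y \in A$ the sub-cascade certifying $Y$ covered stays inside $A$. I would formalize this by assigning each covered index its earliest cascade depth and inducting on that depth, using at every step that an $A$-indexed triple has $A$-indexed parents. Once this lemma is in place, everything else is a routine exchange argument supported by additivity of $C$.
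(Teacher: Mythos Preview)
Your proposal is correct and follows essentially the same route as the paper: the key observation in both is that a prime outside $PS$ is not an ancestor of anything in $A$ and therefore contributes nothing to covering $PS$, after which a standard exchange argument forces the $PS$-part of a minimum-cost basis to itself be a minimum-cost cover of $PS$. Your version is more careful than the paper's---you make the ``covering stays inside $A$'' step explicit via induction on cascade depth and you spell out the $RS$ half and the converse, whereas the paper asserts the first point in one line and leaves the rest implicit---but the underlying argument is the same.
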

\begin{proof}
 By the definition of Ancestor Sets, every non-free prime ancestor of any $P\in PS$ belongs to $PS$.
 Let $Q_1, \dots ,Q_k$ be a minimum-cost cover for \emph{all} of the non-free primes. That is,
\[Span(Q_1, \dots ,Q_k) \text{~with respect to }T \text{~covers all of the non-free primes.}\]
 
Some minimum-cost subset of $\{Q_i\}$ covers the primeset of  $A$. Suppose that this subset 
consists of $Q_1, \dots ,Q_t$.  
If some $Q_i, ~i\leq t$  does not belong to $PS$ (and hence does not belong to $A$), it is not an 
ancestor of any product in $PS$, or in $A$. The span of the remaining $Q_j$ must cover $PS$, 
which contradicts $Q_1, \dots ,Q_t$ being a minimum-cost set. Hence $Q_1,  \dots , Q_t$ must all belong to $PS$ and must cover $PS$.  

Suppose that $Q_1, \dots ,Q_t$ covers $PS$, but is not a minimum-cost cover for $PS$. If the set 
of primes $\{R_1, \dots ,R_m\} \subset PS$, is a minimum-cost cover for $PS$, 
then $R_1, \dots , R_m$ covers every product in $PS$ including the primes $Q_1, \dots ,Q_t$, so 
$R_1, \dots ,R_m, Q_{t+1}, \dots, Q_k$
is a better cover for the non-free primes, 
contradicting the assumption that $Q_1, \dots ,Q_k$ is a minimum-cost cover. Finally, no prime that is already covered by the union of the Essential Primes with $PS$ can be part of a minimum-cost basis.
\end{proof}

At this point, the Ancestor Sets may not be disjoint. An Ancestor Set may contain or be contained in other Ancestor Sets.

\begin{corollary}
If Ancestor Set $A$ is contained in Ancestor Set $B$, then a minimum-cost cover for $B$ contains a minimum-cost cover for $A$.
\end{corollary}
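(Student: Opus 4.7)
The plan is to show that for any minimum-cost cover $Q_B$ of $B$, the subset $Q_A := Q_B \cap PS_A$ is itself a minimum-cost cover of $A$, where $PS_A$ denotes the primeset of $A$ and $PS_B$ that of $B$. Since $PS_A \subset PS_B$ and $Q_A \subset Q_B$, this immediately yields the corollary.

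First I would verify that $Q_A$ covers $A$. The key observation is that any Ancestor Set is \emph{ancestor-closed}: by the lemma $\mathit{Anc}(Y) \subset \mathit{Anc}(X)$ whenever $Y \in \mathit{Anc}(X)$, every parent of any element of $A$ again lies in $A$. Consequently, any prime in $Q_B \setminus PS_A$ lies outside $A$ and is not an ancestor of anything in $A$; it therefore plays no role in the cascade that covers $A$. Since $Q_B$ covers $B \supset A$ in $\mathit{triples}(B)$ and every cascade step that reaches an $r \in A$ uses only triples contained in $\mathit{triples}(A)$, those cascade steps are driven entirely by $Q_B \cap PS_A = Q_A$. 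Hence $Q_A$ covers $A$.

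Next I would establish minimality by contradiction. Suppose $Q_A' \subset PS_A$ covers $A$ at strictly smaller cost than $Q_A$, and set $Q_B' := (Q_B \setminus PS_A) \cup Q_A'$. Then $\mathit{Cost}(Q_B') < \mathit{Cost}(Q_B)$ by additivity. I would show $Q_B'$ still covers $B$, contradicting the minimality of $Q_B$. Each $r \in A$ is already covered by $Q_A' \subset Q_B'$. For $r \in B \setminus A$ I would induct on the cascade depth at which $Q_B$ adds $r$ in $\mathit{triples}(B)$: each parent $p$ of $r$ lies in $B$, and if $p \in A$ then $p$ is covered by $Q_A'$, while if $p \in B \setminus A$ then $p$ is covered by $Q_B'$ at strictly smaller depth by the induction hypothesis.

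The main obstacle is the replacement step, because the primes in $Q_A$ may do double duty: they can contribute to covering products in $B \setminus A$ in addition to covering $A$. The inductive argument handles this by noting that any $A$-parent of a $B \setminus A$ product is itself a \emph{product} in $A$, not specifically one of the primes of $Q_A$, and every product in $A$ is covered by any cover of $A$, including $Q_A'$. Thus the contribution of $A$ toward covering $B \setminus A$ is supplied by the span of $Q_A'$, and the specific identities of the primes in $Q_A$ are irrelevant. Once this is observed the cascade argument runs cleanly and the corollary follows.
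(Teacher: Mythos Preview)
Your proof is correct and follows exactly the line of reasoning the paper intends. The paper states this corollary without an explicit proof, treating it as an immediate consequence of the Ancestor Theorem; your argument is precisely that theorem's proof re-run with $B$ playing the role of the full set of non-free primes and $A$ playing the role of the distinguished Ancestor Set, including the same two moves (primes outside $PS_A$ cannot be ancestors of anything in $A$, and a cheaper cover of $A$ can be swapped in to contradict minimality of the cover of $B$).
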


%%%%%%  subsection    Properties of Independent Ancestor Sets   %%%%%%
\subsection{Properties of Independent Ancestor Sets}
Recall that  $Anc(X)$ is Independent if it does not \emph{properly} contain 
$Anc(Y)$ for any $Y  \in Anc(X)$.
Note that this implies that if $Anc(X)$ is Independent, $Anc(X)=Anc(Y)$ for every $Y \in Anc(X)$.

\begin{lemma}\label{indIsPrime}
Every Independent Ancestor Set is a Prime Ancestor Set.
\end{lemma}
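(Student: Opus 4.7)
The plan is to start from an Independent Ancestor Set $A=Anc(X)$ for some non-free product $X$ and produce a non-free prime $P \in A$ with $Anc(P)=A$. The only tools needed are (i) the earlier lemma that every non-free product has some non-free prime among its ancestors, (ii) the monotonicity lemma $Y \in Anc(X) \Rightarrow Anc(Y) \subset Anc(X)$, and (iii) the definition of independence as ``does not properly contain any other Ancestor Set.''

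First I would apply the ``every non-free product has a non-free prime ancestor'' lemma to $X$, producing a non-free prime $P$ with $P \in Anc(X)=A$. Next I would invoke the monotonicity lemma to conclude $Anc(P) \subset A$. Since $Anc(P)$ is itself an Ancestor Set, the independence of $A$ forbids the containment from being proper, and so $Anc(P)=A$. This exhibits $A$ as the Ancestor Set of a non-free prime, i.e., a Prime Ancestor Set.

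I do not expect any real obstacle here; the argument is essentially a one-line chain of the definitions and the two lemmas just stated. The only subtlety worth calling out explicitly in the write-up is to confirm, using the earlier lemma, that the prime ancestor $P$ produced for $X$ is genuinely non-free (so that $Anc(P)$ is defined within the active, non-free setting the chapter operates in), and to remark that the conclusion matches the parenthetical observation the authors make immediately before the lemma, namely that in an Independent Ancestor Set $A$ one has $Anc(Y)=A$ for every $Y\in A$ — applied here with $Y=P$.
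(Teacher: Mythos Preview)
Your proposal is correct and follows essentially the same route as the paper's proof: pick a prime $Q$ in the primeset of the Independent Ancestor Set $A$ (guaranteed non-empty by the earlier ``every non-free product has a non-free prime ancestor'' lemma), use monotonicity to get $Anc(Q)\subset A$, and invoke independence to force equality. The paper's one-line proof simply asserts $A=Anc(Q)$ for each prime $Q$ in the primeset, leaving the monotonicity-plus-independence step implicit; your version just spells this out more fully.
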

\proof
For each prime $Q$ in the primeset of Independent Ancestor Set $A$, $A=Anc(Q)$.
\endproof

There always is at least one Independent Ancestor Set, namely an Ancestor Set of smallest size.  
The following \emph{Independence Lemma} characterizes Independent Ancestor Sets. 

\begin{lemma}[Independence Lemma.]\label{indepLemma} Suppose that $A$ is an Ancestor Set 
and $Q_1,\dots ,Q_k$ is its primeset. Then $A$ is an Independent Ancestor Set if and only if
\[Anc(Q_1)=Anc(Q_2)=\dots=Anc(Q_k)=A.\]
\end{lemma}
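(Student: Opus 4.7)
The plan is to prove the two directions separately, leaning heavily on two earlier results: the containment lemma ($Y\in Anc(X)$ implies $Anc(Y)\subset Anc(X)$) and the fact that every non-free product has a non-free prime in its ancestor set. For orientation, note that the forward direction is essentially the content of Lemma~\ref{indIsPrime}, so my main work will be the reverse direction.

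For the forward direction, I would argue as follows. Suppose $A$ is Independent and fix any $Q_i$ in its primeset. Since $Q_i\in A$, the containment lemma gives $Anc(Q_i)\subseteq A$. If this containment were proper, $A$ would properly contain the Ancestor Set $Anc(Q_i)$ with $Q_i\in A$, contradicting the definition of Independence. Hence $Anc(Q_i)=A$ for every $i$.

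For the reverse direction, assume $Anc(Q_1)=\cdots=Anc(Q_k)=A$ and suppose, for contradiction, that $A$ is not Independent. Then there is some $Y\in A$ with $Anc(Y)\subsetneq A$. I would split on whether $Y$ is itself a prime. If $Y$ is a prime, then $Y$ is one of the $Q_i$ and by hypothesis $Anc(Y)=A$, contradicting the strict containment. If $Y$ is not a prime, the lemma on non-free products guarantees some non-free prime $P\in Anc(Y)$. Then $P\in A$, so $P$ lies in the primeset of $A$, i.e.\ $P=Q_j$ for some $j$. The containment lemma applied to $P\in Anc(Y)$ yields $Anc(P)\subseteq Anc(Y)$, so $A=Anc(Q_j)=Anc(P)\subseteq Anc(Y)\subsetneq A$, a contradiction.

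I do not expect a serious obstacle: both directions are essentially bookkeeping on the two lemmas already available. The only subtlety is to remember to handle the case where the witness $Y$ of non-Independence happens to be a prime versus a non-prime consensus product, since the ``non-free product has a non-free prime ancestor'' lemma is what bridges back to the primeset in the latter case.
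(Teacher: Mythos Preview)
Your proof is correct and follows essentially the same route as the paper: both directions rest on the containment lemma and on the fact that every ancestor set contains a non-free prime. The only cosmetic difference is that for the converse the paper picks a prime in the properly contained ancestor set directly, while you first fix the witness $Y$ and case-split on whether $Y$ is itself a prime; the split is harmless and could be collapsed into a single argument.
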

\begin{proof} 
Suppose that $A$ is Independent. Since each $Q_i \in A$, $Anc(Q_i) \subset A$. But $Anc(Q_i)$ cannot be properly contained in $A$ so $Anc(Q_i)=A$.

For the converse, suppose that $Anc(Q_i)=A$ for every prime $Q_i$ in the primeset of $A$. Suppose that $B$ is an Ancestor Set properly contained in $A$. 
Every Ancestor Set must contain primes. Suppose that prime $Q \in B$. Then $Anc(Q) \subset  B$. Since $B \subset A$, $Q \in A$. But by hypothesis, $Anc(Q)=A$, which contradicts $B$ properly 
contained in $A$.
\end{proof}

\begin{lemma}
If $A_m$ and $A_k$ are distinct Independent Ancestor Sets, then $A_m$ and $A_k$ are disjoint, that is, $A_m \cap A_k$ is empty .
\end{lemma}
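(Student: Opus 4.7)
The plan is to derive a contradiction by finding a single prime $Q$ that forces $A_m$ and $A_k$ to coincide. Suppose, toward contradiction, that some non-free product $X$ lies in $A_m \cap A_k$.

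First, I would establish that $\mathrm{Anc}(X)$ sits inside both $A_m$ and $A_k$. Since each $A_i$ is an Ancestor Set, it equals $\mathrm{Anc}(Y_i)$ for some non-free product $Y_i$. The assumption $X \in A_m = \mathrm{Anc}(Y_m)$ means $X$ is an ancestor of $Y_m$, so the earlier obvious lemma ($Y \in \mathrm{Anc}(X) \Rightarrow \mathrm{Anc}(Y) \subseteq \mathrm{Anc}(X)$, applied with roles swapped) gives $\mathrm{Anc}(X) \subseteq \mathrm{Anc}(Y_m) = A_m$. The same reasoning yields $\mathrm{Anc}(X) \subseteq A_k$.

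Next, I would produce a prime inside $\mathrm{Anc}(X)$. By the lemma stating that every non-free product has some ancestors that are non-free primes, $\mathrm{Anc}(X)$ contains at least one non-free prime $Q$. By the previous step, $Q \in A_m$ and $Q \in A_k$, so $Q$ lies in the primesets of both Independent Ancestor Sets. Now the Independence Lemma applies twice: since $A_m$ is Independent and $Q$ is a prime in its primeset, $\mathrm{Anc}(Q) = A_m$; and since $A_k$ is Independent and $Q$ is a prime in its primeset, $\mathrm{Anc}(Q) = A_k$. Hence $A_m = \mathrm{Anc}(Q) = A_k$, contradicting the hypothesis that $A_m$ and $A_k$ are distinct.

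The only place I expect any subtlety is the very first step --- making sure that $X \in A_m$ really does imply $\mathrm{Anc}(X) \subseteq A_m$ even though $X$ itself need not lie in $\mathrm{Anc}(X)$ (the ``closed'' vs.\ ``not closed'' distinction introduced earlier). The resolution is simply that $A_m$ is defined as the ancestor set of some product, so $X$ being a member of $A_m$ makes $X$ an ancestor of that product, and the obvious monotonicity lemma then carries $\mathrm{Anc}(X)$ inside $A_m$. Once this is secured, the rest of the argument is an immediate application of the Independence Lemma.
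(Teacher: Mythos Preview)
Your proof is correct and uses the same core idea as the paper's, though with an unnecessary detour. The paper's argument is a single line: since an Independent Ancestor Set $A$ satisfies $A = \mathrm{Anc}(Y)$ for \emph{every} $Y \in A$ (the remark stated just before the Independence Lemma, not only for primes), any $X \in A_m \cap A_k$ gives $A_m = \mathrm{Anc}(X) = A_k$ directly --- your passage to a prime $Q \in \mathrm{Anc}(X)$ and subsequent appeal to the Independence Lemma as stated for primes is valid but avoidable.
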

\begin{proof} 
If $X \in A_m$ and $X \in A_k$, then $A_m=Anc(X)=A_k$.
\end{proof}
Note that the primesets of distinct independent ancestor sets $A_m$ and $A_k$ are disjoint,  and $triples(A_m)$ is disjoint from $triples(A_k)$.

%%%%%%%% Finding the Initial Independent Ancestor Sets %%%%%%%%%
\subsection{Finding the Initial Independent Ancestor Sets}
Suppose that the $triples$ table is known and has been stripped of
all inactive or useless entries.
The procedure that follows uncovers the independent ancestor sets iteratively,
one batch at a time. The steps are easy to follow, 
and prove that the iteration will succeed.\footnote{However, the algorithm in Appendix \ref{fastInd} bypasses the calculation of most of the ancestor sets. 
It computes batches of independent ancestor sets directly using a method that converges quite quickly.} 

After the first batch of Independent Ancestor Sets has been generated,
$triples$ will be replaced by a smaller table and a fresh batch of Independent
Ancestor Sets will be calculated from the smaller table.
 
 Iteration continues until the $triples$ table is empty. 
We assume that at each step in the discussion that follows, $triples$ identifies
the table that is current. The steps that follow generate the first batch of Independent Ancestor Sets. 
Note that initially, many Ancestor Sets will have the same entries.
\begin{enumerate}
\item Since all Independent Ancestor Sets are Prime Ancestor Sets, build
the Prime Ancestor Sets for $triples$.\footnote{The algorithm in Appendix \ref{fastInd}
builds the first batch of independent sets directly and quickly. However, the steps listed here 
are easy to follow, and prove that the iteration will succeed.}
\item Order the list of Prime Ancestor Sets  by increasing size. 
\item Discard all Ancestor Sets $Anc(P)$ 
such that $P\notin Anc(P)$. ($P$ will belong to $Surplus$.) 
\item The smallest Ancestor Set, $Anc(Q)$, must be Independent. Move it to the list of Independents.
\item Every Ancestor Set $B$ such that $Q\in B$ equals or contains $Anc(Q)$.  Discard these sets.
\item If any Ancestor Sets remain, iterate from (4).
\end{enumerate}

In the two sections that follow, we assign table $triples(A_i)$ to each
Independent Ancestor Set $A_i$ and then reduce the size of $triples$ 
in preparation for finding the next batch of Independent Ancestor Sets. 
Every Independent Ancestor Set that has been discovered
causes a piece of the current $triples$ table to be removed  
and additional covered indices in $triples$ to be set to 0.

%%%%%  subsection  Assigning the Triples of Each Independent Ancestor Set  %%%%%%%
\subsection{Assigning $triples(A_k)$ to Independent $A_k$}
Suppose that $A_k$ is Independent. Recall that $triples(A_k)$ is the set of all triples $(r,i,j)$ such that $r$, $i$, and $j$, if $j>0$, belong to $A_k$. Table $triples(A_k)$ contains all covering relationships for each $r\in A_k$. Given a positive additive cost function, this information determines a minimum-cost set of primes that covers $PS_k$ (and $A_k$) and is part of a minimum-cost basis for $f$. 

Since the Independent Ancestor Sets $\{A_k\}$ are disjoint, the arrays $triples(A_k)$ also are disjoint. The $triples(A_k)$ tables are called the \emph{Independent triples tables}.  

All of the entries in these tables now can be removed from the main $triples$ table.

%%%%%%  subsection   Cascading the Union of the Independent Ancestor Sets  %%%%%%
\subsection{Cascading the Union of the Independent Ancestor Sets}\label{cascade2}
As was noted in Section \ref{ancThm}, the Ancestor Theorem shows that the minimum-cost covers of the current set of Independent Ancestor Sets are disjoint pieces of the overall minimum-cost basis. 

A minimum-cost cover of $PS$ depends only on
$triples(A)$ and the given cost function $C$. 
From the point of view of the other Independent Ancestor Sets, 
this separate piece of the problem can be viewed as solved. 
All of the indices covered  by $PS$ can be removed from the problem 
by cascading $A$ through $triples$. 

We already have removed each $triples(A)$ from the table but other downstream indices 
may be covered by the indices in $A$. Note that the individual Independent Ancestor Sets do not have to be cascaded separately.
If $A_1,\dots ,A_m$ form the current batch of Independent Ancestor Sets, let $AU$ denote their union.
The covered indices are set to $0$ by performing a removal cascade of $AU$ through the current $triples$. That is, set the initial cascade list to $AU$ and,
\begin{enumerate}
\item Replace each index belonging to the cascade list with $0$ in the current $triples$.
\item An $(s,0,0)$ entry indicates that $s$ has been covered by $AU$. Remove all $(s,x,y)$ entries 
and append index $s$ to the \emph{newIndex} cascade list.
\item Iterate, setting all new $s$ indices to $0$ in the triples table, until no new covered indices are found. 
\end{enumerate}

The final cascade list equals $AU+extra~indices.$ 
The extra indices that correspond to primes belong to $Surplus$.

As a result of the cascade, all indices that were in $AU$ and any extra indices that were covered by $AU$ have been removed from the current $triples$ 
and hence have been removed from the problem. 
Specifically, if any $X_t$ in an Independent $A$ belongs to non-Independent $B$, then, since $Anc(X_t)=A$,  all of $A$ is contained in $B$.  A cascade of $A$ through $triples(B)$ 
would remove all indices in $A$ from $B$ and also would remove any extra indices in $B$ that are covered by $A$.
 
 %%%%%%  subsection   The Iterative Ancestor Set Algorithm   %%%%%%
\subsection{The Iterative Ancestor Set Algorithm}\label{BigIter}
 At this point, the initial Independent Ancestor Sets $\{A_k\}$ and tables $triples(A_k)$ have been set aside and the current $triples$ has been reduced and simplified. If the current table is not empty, iterating the preceding steps for the current table will reveal new disjoint Independent Ancestor Sets with their own associated tables. 
 Since the size of the current $triples$ table decreases at each iteration, the process will terminate. 

%%%%%START HERE  %%%%%%
The overall process is summarized below.
The following operations build the list of Independent Ancestor Sets and their disjoint primesets.
\begin{enumerate}
\item Remove duplicate entries and useless entries 
(i.e.  $(r,i,j)$ when there is an entry $(r,i,0)$ or $(r,j,0)$) from the current $triples$. Remove inactive indices. 
\item Find the Prime Ancestor Sets for the current $triples$.
\item Identify a new batch of Independent Ancestor Sets.
\item For each Independent Ancestor Set $A_k$, let $PS_k$ be its primeset
and find $triples(A_k)$. 
Associate $triples(A_k)$ with $PS_k$ and 
remove the $triples(A_k)$ entries  from the current $triples$ table. 
\item Cascade the union of the new Independent Ancestor Sets 
through the current $triples$ table in order to remove 
all other products covered by the current batch of Independent
Ancestor Sets from the problem.
\item If the resulting table is not empty, iterate from step 1.
\end{enumerate}

Since the smallest Ancestor Set in a batch always is Independent, at each iteration, the total number of 
Independent Ancestor Sets will increase and the number of entries in the 
current $triples$ will decrease and eventually become zero.  

At that point, the set of Independent Ancestor Sets $\{A_i\}$ is 
complete and all of the disjoint primesets $PS_i$ have been identified.

%%%%%% section   The Theorems  %%%%%%
\section{Summary of the Theorems}
We summarize the results of the preceding sections in the following theorems.

\begin{theorem}[Cascade List] 
The cascade list resulting from cascading the union of the canonical Independent primesets $PS_i$ of Boolean function $f$ through $triples(f)$ contains all of the products in $triples(f)$.
\end{theorem}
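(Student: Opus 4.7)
The plan is to proceed by induction on the iteration count of the iterative algorithm described in Section \ref{BigIter}. Let $T_n$ denote the current $triples$ table at the start of iteration $n$, so that $T_1$ equals $triples(f)$ after inactive and useless entries have been stripped. Let $P^{(n)}$ denote the union of all primesets $PS_i$ discovered through the end of iteration $n$, and let $R_n$ denote the set of indices that have been deleted from the current $triples$ table by the end of iteration $n$. The inductive claim I would establish is that $R_n$ is contained in the cascade list of $P^{(n)}$ when $P^{(n)}$ is cascaded through the \emph{unmodified} table $triples(f)$.

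The base case $n = 0$ is vacuous. For the inductive step, assume $R_{n-1}$ lies in the cascade of $P^{(n-1)}$ in $triples(f)$. In iteration $n$, new Independent Ancestor Sets $A_1^{(n)}, A_2^{(n)}, \dots$ with primesets $PS_1^{(n)}, \dots$ are identified; each $triples(A_j^{(n)})$ is extracted, and then the union $\bigcup_j A_j^{(n)}$ is cascaded through what remains of $T_n$ to pick up extra indices. An index newly added to $R_n$ either belongs to some $A_j^{(n)}$, in which case the lemma that $PS_j^{(n)}$ covers $A_j^{(n)}$ with respect to $triples(A_j^{(n)})$ supplies a sequence of cascade deductions inside $T_n$ starting from $PS_j^{(n)} \subseteq P^{(n)}$, or else it is one of the extra indices, in which case the union cascade itself provides such a sequence.

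The main step is to lift each cascade deduction in $T_n$ back to one in $triples(f)$. An entry $(r,i,j)$ appearing in $T_n$ came from some entry $(r, i', j')$ of $triples(f)$, where $i$ equals either $i'$ or $0$, with the replacement by $0$ happening precisely when $i' \in R_{n-1}$, and analogously for $j$. By the inductive hypothesis, any such replaced index already belongs to the cascade of $P^{(n-1)} \subseteq P^{(n)}$ in $triples(f)$. Hence every cascade deduction in $T_n$ can be simulated in $triples(f)$, possibly routing through indices already known to lie in the cascade. This bookkeeping between the reduced and original tables is the one genuinely delicate part of the argument, but it is routine once set up correctly.

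Once the inductive step is in hand, the theorem follows immediately: the algorithm terminates only when the current $triples$ table becomes empty, so at termination $R_n$ equals the set of all indices in $triples(f)$. At that point $P^{(n)}$ coincides with the full union $\bigcup_i PS_i$ of the canonical Independent primesets, and so the cascade list obtained from cascading $\bigcup_i PS_i$ through $triples(f)$ contains every product indexed in $triples(f)$.
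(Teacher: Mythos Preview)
Your approach---tracking deletions through the iterative algorithm of Section~\ref{BigIter} and lifting each cascade deduction in the reduced table $T_n$ back to one in $triples(f)$---is sound and is essentially a rigorous expansion of the paper's two-sentence argument. The lifting step you isolate (an entry of $T_n$ comes from an entry of $triples(f)$ with some positions zeroed out, and those zeroed indices lie in $R_{n-1}$ by induction) is exactly the content the paper leaves implicit.

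There is, however, a gap in your case analysis. In the inductive step you say an index newly joining $R_n$ is either in some $A_j^{(n)}$ or is one of the ``extra'' indices picked up by the union cascade. But step~(1) of each iteration also removes indices that have become \emph{inactive}: an $r$-index that no longer appears in any $i$- or $j$-position after the previous cascade. Such an $r$ is deleted without ever appearing as $(r,0,0)$, so it fits neither of your two cases, and in fact your inductive invariant $R_n \subseteq \text{cascade}(P^{(n)}, triples(f))$ can fail for it: the parents $i,j$ in its surviving entries $(r,i,j)$ may still be uncovered at stage $n$ and only get covered in later iterations. The fix is easy---either exclude inactive-removed indices from $R_n$ and handle them in one stroke at the end (each such $r$ retains an entry $(r,i',j')$ in $triples(f)$ whose $i',j'$ are eventually covered), or observe that deferring all inactive removals to the end does not alter which indices are cascaded out. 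The paper's terse clause ``since the active products cover all of the inactive non-free products'' is exactly this deferral, so you should add the corresponding sentence to close the argument.
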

\proof
Each $PS_i$ covers $A_i$. The reduction process cascades the union of the Independent Ancestor Sets $A_i$ through the table, covering all of the active, non-free products. Since  the active products cover all of  the inactive non-free products all non-free products will be covered.
\endproof

\begin{theorem} \label{surplusProof}
Given a Boolean function $f$ with Independent Ancestor Sets $A_i$ and primesets $PS_i\subset A_i$, 
let $Surplus$ be the set of non-free primes that do \emph{not} belong to $\bigcup{PS_i}$.
Then no prime in $Surplus$ can be part of \emph{any} minimum-cost basis.
\end{theorem}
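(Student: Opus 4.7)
The plan is to combine the Cascade List Theorem (just established) with an iterative application of the Ancestor Theorem. At a high level, the Cascade List Theorem guarantees that the union of the canonical primesets $\bigcup_i PS_i$ together with the Essential Primes already covers every non-free product in $triples(f)$, and in particular covers every prime in $Surplus$. The Ancestor Theorem then restricts the choice of primes in any minimum-cost basis to those drawn from the $PS_i$. Since by definition $Surplus$ is disjoint from $\bigcup_i PS_i$, no prime in $Surplus$ can appear in a minimum-cost basis.

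More concretely, I would argue by induction on the number of iterations of the Iterative Ancestor Set Algorithm in Section \ref{BigIter}. At the first iteration, let $A_1,\dots,A_{m_1}$ be the initial Independent Ancestor Sets with primesets $PS_1,\dots,PS_{m_1}$. Applying the Ancestor Theorem successively to each of these disjoint sets (using that the $triples(A_k)$ are mutually disjoint and disjoint from the reduced table produced by cascading $\bigcup_k A_k$), every minimum-cost basis $B$ must decompose as
\[B = \{\text{Essential Primes}\} \;\cup\; Q_1 \;\cup\; \dots \;\cup\; Q_{m_1} \;\cup\; R,\]
where each $Q_k\subset PS_k$ is a minimum-cost cover of $PS_k$ in $triples(A_k)$ and $R$ is a minimum-cost cover of the remaining non-free primes with respect to the cascaded $triples$ table. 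The inductive hypothesis applied to the reduced problem expresses $R$ as a union of minimum-cost covers drawn from the later $PS_i$. Because the algorithm terminates with an empty $triples$ table, this unrolling shows
\[B = \{\text{Essential Primes}\} \;\cup\; \bigcup_i Q_i, \qquad Q_i \subset PS_i.\]

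Now suppose for contradiction that some $P\in Surplus$ belongs to a minimum-cost basis $B$. Since $P$ is non-free, $P\notin\{\text{Essential Primes}\}$, so $P$ must lie in some $Q_i\subset PS_i$. But by the very definition of $Surplus$, $P\notin\bigcup_i PS_i$, giving a contradiction. The Cascade List Theorem is used here only to confirm that the decomposition is exhaustive: every non-free prime, including $P$, is in fact accounted for by the iteration (either as an element of some $PS_i$, or as an index removed during the cascade of $\bigcup_k A_k$), so there is no ``leftover'' category into which $P$ could escape.

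The main obstacle I anticipate is the bookkeeping in the inductive step, specifically verifying that the Ancestor Theorem applies cleanly to each successive reduced table. One must check that after removing $triples(A_k)$ and cascading $\bigcup_k A_k$, the remaining primes and their Ancestor Sets relative to the reduced table behave correctly; in particular, that a minimum-cost cover of the reduced problem, combined with the $Q_k$, remains a minimum-cost cover of the entire non-free prime set. This is implicit in the disjointness of the $triples(A_k)$ and in Lemma \ref{primeCovered} (covered primes become redundant), but spelling it out carefully is the one place where the argument requires attention rather than a direct appeal to previously proved statements.
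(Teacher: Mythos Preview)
Your proof is correct and uses the same two ingredients as the paper---the Ancestor Theorem and the Cascade List Theorem---but the paper's argument is shorter because it avoids the inductive decomposition you set up. Instead of showing that every minimum-cost basis has the exact form $\{\text{Essentials}\}\cup\bigcup_i Q_i$ with $Q_i\subset PS_i$, the paper argues by \emph{redundancy}: any minimum-cost basis must contain, for each $i$, some $QS_i\subset PS_i$ covering $PS_i$ (Ancestor Theorem), and the union of these $QS_i$ already covers every non-free product, including any $P\in Surplus$ (Cascade List Theorem); hence if $P$ were in the basis it would be covered by the other basis primes, so dropping it would yield a strictly cheaper basis, contradicting minimality.

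The advantage of the paper's route is that it sidesteps precisely the bookkeeping you identify as the ``main obstacle'': it does not need to verify that the Ancestor Theorem applies cleanly to each successive reduced table, nor that the decomposition $B=\{\text{Essentials}\}\cup\bigcup_i Q_i$ is \emph{exact} (i.e., that nothing else appears). It only needs the inclusion $B\supseteq\bigcup_i QS_i$, which follows from a single application of the Ancestor Theorem per $A_i$, plus the covering statement. Your approach buys a stronger intermediate conclusion (the full structural form of $B$), which is in fact the content of the Canonical Partition Theorem stated immediately after, but for the present statement the redundancy shortcut is both sufficient and cleaner.
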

\proof
By definition, $P\in Surplus$ does not belong to any $PS_i$ and hence is not an ancestor of any prime that belongs to any $PS_i$.  
The products in $A_i$ can only be covered by a subset $QS_i$ of $PS_i$ that covers $PS_i$, and the union of the $QS_i$ covers \emph{all} non-free products, including the products in $Surplus$.  Therefore, any prime $P\in Surplus$ that was part of a basis would be covered by other primes
that are in the basis, so the basis would not have minimum cost.
\endproof

\begin{theorem}[Canonical Partition]
Let $f$ be a Boolean function. There is a canonical partition of the 
complete set of primes of $f$ into disjoint sets of primes, 
\[(Essential~Primes) \cup (PS_1) \cup \dots  \cup (PS_N) \cup (Unnecessary~Primes)\]
where
\begin{enumerate}
\item The Essential Primes must be part of any basis for $f$.
\item $PS_1, \dots,PS_N$ are the 
primesets of the Independent Ancestor Sets $A_1,\dots,A_N$ for $f$.
\item The Independent Ancestor Sets for $f$ are disjoint.
\item The Unnecessary Primes consist of the union of the free non-essential primes
with $Surplus$, and an Unnecessary Prime
cannot be part of a minimum-cost basis for any positive,
additive cost function.
\item For each $PS_i$, there is a canonical triples table $TS_i$ 
that contains all covering relationships between
the products that belong to $A_i$.
\item Given a positive additive cost function $C$, any minimum-cost 
basis for $f$ with respect to $C$ must consist of the union of the 
Essential Primes with subsets $QS_i(C)\subset PS_i$ such that
$QS_i(C)$ is a minimum-cost cover of $PS_i$. (That is, 
$Cascade(QS_i(C),TS_i)$ is empty.)
\end{enumerate}
\end{theorem}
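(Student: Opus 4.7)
The plan is to assemble the theorem from the machinery already developed, verifying each of the six bullet points in turn and reserving the real work for bullet (6), which is the place where the iterative nature of the algorithm has to be reconciled with the (single-step) Ancestor Theorem. Throughout, I will let $A_1,\dots,A_N$ denote the Independent Ancestor Sets produced by the full run of the iterative algorithm of Section \ref{BigIter}, with primesets $PS_1,\dots,PS_N$ and associated tables $TS_i=triples(A_i)$.

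Bullets (1)--(5) will follow almost immediately from previous work. For (1), the definition of an Essential Prime forces its membership in every basis, since it is not covered by the other primes. For (2), $PS_i$ is the primeset of $A_i$ by construction in step (4) of the Iterative Ancestor Set Algorithm. For (3), disjointness of the $A_i$ (and hence of the $PS_i$) follows from the lemma that two distinct Independent Ancestor Sets sharing a product would force them to be equal; the fact that even Independents produced in different iterations are disjoint follows because step (5) cascades the previous batch out of the table, so no index from an earlier $A_i$ can reappear. For (4), the free non-Essential primes are unnecessary by definition, while for any $P\in Surplus$, Theorem \ref{surplusProof} shows $P$ cannot lie in a minimum-cost basis; together these exhaust the primes not contained in $\bigcup_i PS_i$ or in the Essentials. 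For (5), $TS_i=triples(A_i)$ is defined in Section \ref{secAnc} to contain exactly the covering relationships $(r,i,j)$ with all indices in $A_i$, which is canonical since $A_i$ is.

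The content of (6) is where the argument needs care. The direction that every minimum-cost basis has the stated form is an inductive application of the Ancestor Theorem. I will proceed by induction on the iteration step of the algorithm. At the first iteration, the theorem directly says that a minimum-cost basis splits as the Essentials, a minimum-cost cover $QS_1(C),\dots,QS_{k_1}(C)$ of the primesets of the first batch of Independent Ancestor Sets (each drawn from its own $PS_i$ and minimum-cost with respect to $TS_i$, since $TS_i$ contains all covering relationships among products of $A_i$), and a minimum-cost cover of the remaining uncovered primes. After step (5) cascades $\bigcup_{i} A_i$ out of the table, the remaining problem is precisely to find a minimum-cost cover for those remaining primes using only covering relationships that survive the cascade; this is a strictly smaller instance of the same partitioning problem, so the inductive hypothesis applies. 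Conversely, any choice of minimum-cost covers $QS_i(C)\subset PS_i$ in the individual tables $TS_i$ yields, together with the Essentials, a basis: by Lemma \ref{basisALL} (and the Cascade List theorem just above) the union of the $PS_i$-covers cascades to cover every non-free product in $triples(f)$, and the Essentials handle the free products.

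The main obstacle will be justifying the inductive step cleanly, namely showing that the Ancestor Theorem continues to apply after products have been removed from the $triples$ table by the cascade. Concretely, I need to verify that the reduced table obtained in step (5) is still a legitimate covering table in the sense of the earlier sections: that Ancestor Sets computed from it agree with the ``relative'' Ancestor Sets one would get by restricting the old ones, and that a minimum-cost cover in the reduced problem is unaffected by the removal of indices that are already covered by the previously chosen $QS_i$. The cleanest way to handle this is to note that, since $Cascade(A,T)$ only strips indices that are provably covered once $A$ is covered, any prime that does not appear in the reduced table is either already covered (hence irrelevant to a minimum-cost extension) or in $Surplus$; the downstream Ancestor Sets computed from the reduced table are therefore exactly what the Ancestor Theorem requires, and the induction closes.
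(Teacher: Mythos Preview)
The paper does not give an explicit proof of this theorem; it is presented as a summary of the preceding sections, immediately following the Cascade List theorem and Theorem~\ref{surplusProof}, with no accompanying argument. Your proposal correctly assembles the pieces the paper has already established and in fact supplies more detail than the paper does.

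In particular, your treatment of bullet~(6) identifies a point the paper leaves implicit: that the Ancestor Theorem continues to apply after each cascade reduces the $triples$ table. The paper simply asserts in Section~\ref{BigIter} that iterating the procedure on the reduced table ``will reveal new disjoint Independent Ancestor Sets'' whose minimum-cost covers are part of a minimum-cost basis, without re-justifying the Ancestor Theorem in the reduced setting. Your inductive framing, together with the observation that cascading removes only indices that are provably covered once the earlier primesets are covered (so the residual covering problem is exactly the one the Ancestor Theorem addresses for the remaining primes), is the natural way to make this rigorous. The converse direction you sketch---that the Essentials together with any choice of minimum-cost covers $QS_i(C)$ form a basis---is exactly what the Cascade List theorem provides.

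So your approach is not different from the paper's; it is the paper's implicit argument made explicit.
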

§
The sections that follow deal with the spans of primes, which are the keys to the solutions $QS_i$ of some uncomplicated Independent Ancestor Sets $A_i$. 

%%%%%%% Section  Spans of Individual Primes and Simple Solutions   %%%%%%
\section{Spans of Individual Primes and Simple Solutions\label{primeSpans} }
Suppose that $A$ is an Independent Ancestor Set for Boolean function $f$ and $PS$ 
is the primeset for $A$. The set of spans of the individual primes $P\in PS$ are of special interest.  
Recall that the span of a prime $P$ in $A$ equals the products in $A$ that are covered by $P$. 

\begin{lemma} Prime $P$ is an ancestor of every product in $Span( P)$.
\end{lemma}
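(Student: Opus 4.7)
The plan is to prove this by induction on the round at which each product is added to $Span(P)$ during the cascade of $\{P\}$ through $T$. Recall from Definition \ref{span2} that $Span(P,T)$ is the cascade list that results from cascading $\{P\}$: we seed the list with $P$ (round $0$), then at each subsequent round we append any index $r$ such that a triple $(r,i,j)$ in $T$ has its parents already on the list (treating $0$ or a free index as already ``covered''). Every new member of the cascade list is therefore witnessed by a triple in which at least one parent has been placed on the list at an earlier round.

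For the base case, $P$ itself lies in $Span(P)$, so I need $P\in Anc(P)$. In the setting of the section, $P$ belongs to the primeset of an Independent Ancestor Set $A$, and the Independence Lemma together with Lemma \ref{indIsPrime} gives $Anc(P)=A$; since $P\in A$, we conclude $P\in Anc(P)$. This disposes of the base of the induction.

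For the inductive step, suppose $r\neq P$ is added to the cascade list, and assume by hypothesis that every index added in a strictly earlier round has $P$ as an ancestor. By construction there is a triple $(r,i,j)\in T$ witnessing the addition of $r$, where $i$ is a parent of $r$ that already lies on the cascade list (and, if $j>0$, so does $j$, which is also a parent of $r$). By the inductive hypothesis $P\in Anc(i)$. Applying the ``obvious lemma'' stated immediately after the definition of Ancestor Sets, $i\in Anc(r)$ forces $Anc(i)\subset Anc(r)$, and therefore $P\in Anc(r)$, as required.

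The only real delicacy is the base case $P\in Anc(P)$, since in general the definition of $Anc(\cdot)$ does not force a product to appear in its own ancestor set (the distinction captured by the notion of a \emph{closed} Ancestor Set). I expect to handle this by explicitly invoking the Independent setting of Section \ref{primeSpans}: inside an Independent Ancestor Set, every prime of the primeset has $A$ as its own ancestor set, so the self-ancestry holds automatically. After that, the cascade induction is routine and requires no further machinery beyond the definitions of parent and ancestor and the transitivity lemma $Anc(Y)\subset Anc(X)$ when $Y\in Anc(X)$.
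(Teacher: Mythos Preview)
Your proof is correct and is essentially a fully worked-out version of the paper's one-line argument (``Clear from the definition of the span''): you simply make explicit the cascade induction that the paper leaves to the reader. You are also right that the base case $P\in Anc(P)$ is the only non-automatic point, and your appeal to the Independent Ancestor Set context of Section~\ref{primeSpans} via the Independence Lemma is precisely the justification the paper is tacitly relying on there.
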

\begin{proof}
Clear from the definition of the span.
\end{proof} 

\begin{definition}\label{maxSpan} The span of prime $P_r\in A$ is \emph{maximal} if the span includes at least one prime other than $P_r$ and $Span(P_r)$ is not properly contained in the span of any other prime in $A$.
\end{definition}

\begin{definition}\label{closedSpan} The span of prime $P_r\in A$ is \emph{closed} if 
$triples(A)$ includes a triple $(r,i,j)$ such that $X_i \in Span(P_r)$ and, if $j>0)$, 
$X_j \in Span(P_r)$.
\end{definition}

\begin{lemma}\label{spanSubset}  If $Q \in Span( P)$ then $Span( Q) \subset Span( P)$.
\end{lemma}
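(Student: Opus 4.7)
The plan is to prove this by induction on the stage at which each element of $Span(Q)$ is appended to the cascade list when $Q$ is cascaded through the ambient $triples$ table $T$. The key structural fact I would use is that $Span(P)$ is itself \emph{closed} under the covering relations encoded in $T$: whenever a triple $(r,i,0)$ has $i\in Span(P)$, or a triple $(r,i,j)$ with $j>0$ has both $i,j\in Span(P)$, the index $r$ must already lie in $Span(P)$. This is immediate from the definition of the span as the cascade list: if such an $r$ were absent, a further round of the cascade starting from $P$ would produce a $(r,0,0)$ entry and append $r$, contradicting termination.

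First I would pick an arbitrary $X \in Span(Q)$ and examine the round in which $X$ entered the cascade of $\{Q\}$ through $T$. In the base round, $X=Q$, and $Q\in Span(P)$ holds by hypothesis. For the inductive step, $X\neq Q$ was appended because some triple in $T$ of the form $(X,i,0)$ or $(X,i,j)$ had its $i$-index (and, if $j>0$, its $j$-index) already in the cascade list from $Q$ at an earlier round. By the inductive hypothesis these earlier indices also lie in $Span(P)$. Applying the closure property of $Span(P)$ to that very same triple $(X,i,0)$ or $(X,i,j)$ shows $X\in Span(P)$. Iterating through all rounds yields $Span(Q)\subset Span(P)$.

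The only subtlety I would flag is that both $Span(P)$ and $Span(Q)$ must be computed relative to the same table $T$ (as is assumed throughout Section~\ref{primeSpans}, where the ambient table is $triples(A)$). The cascade procedure deletes and rewrites entries in its working copy, but those deletions are purely bookkeeping: a triple $(r,i,j)$ removed during the cascade of $\{P\}$ is only removed after its information has already been absorbed into the cascade list, so the same triple is still available in the independent copy of $T$ used for the cascade of $\{Q\}$, and vice versa. With this observation the induction closes cleanly, and no case analysis beyond $j=0$ versus $j>0$ is needed.
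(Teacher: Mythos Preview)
Your proof is correct. The paper's own proof is simply the one-line ``Clear from the definition of a span,'' so what you have done is spell out rigorously the closure-plus-induction argument that makes this clear; the underlying idea is the same, but you have made explicit the termination/closure property of the cascade and the stage-by-stage propagation that the paper leaves implicit.
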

\begin{proof}
Clear from the definition of a span.
\end{proof}

\begin{definition}
Primes $P$ and $Q$ are equivalent if and only if  $P$ is in the span of $Q$ and $Q$  is in the span of $P$.
\end{definition}

\begin{lemma}[Equivalence Lemma] If $P$ and $Q$ are equivalent, then  $Span( P)=Span( Q)$ and the span is closed with respect to both $P$ and $Q$.
\end{lemma}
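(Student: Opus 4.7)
The plan is to separate the conclusion into two claims and to dispatch them with different tools: the equality $Span(P) = Span(Q)$ follows immediately from Lemma \ref{spanSubset}, while the closure claim requires unpacking the cascade construction that defines $Span$.

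For the equality, I would apply Lemma \ref{spanSubset} in each direction of the equivalence. The hypothesis $Q \in Span(P)$ gives $Span(Q) \subset Span(P)$, and symmetrically $P \in Span(Q)$ gives $Span(P) \subset Span(Q)$, so the two spans coincide.

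For closure, I restrict to the non-trivial case $P \neq Q$ (the case $P = Q$ is a degenerate instance of equivalence with no substantive content about closure). Since $P \in Span(Q)$ and $P \neq Q$, $P$ must have been appended to the cascade list at some step during the cascade of $\{Q\}$ through $triples(A)$. By the cascade procedure, an index $r$ is appended precisely when some entry $(r,i,j)$ in the table has been reduced to $(r,0,0)$, which means that every originally positive entry among $i$ and $j$ had already been placed on the cascade list by that step, hence belongs to $Span(Q)$. Taking $r$ to be the index of $P$ yields a triple $(r_P, i, j) \in triples(A)$ with $X_i \in Span(Q)$ and, if $j > 0$, $X_j \in Span(Q)$. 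Because $Span(Q) = Span(P)$ by the first part, this triple witnesses closure of $Span(P)$ with respect to $P$, as in Definition \ref{closedSpan}. The symmetric argument starting from $Q \in Span(P)$ gives closure with respect to $Q$.

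The main obstacle is really just bookkeeping: one must carefully follow the cascade procedure to extract the triple responsible for appending $P$, and verify that this triple actually lies in $triples(A)$ rather than merely in the ambient $triples$ table. This last point holds because $P \in A$ and the parents $X_i, X_j$ are ancestors of $P$, so by the definition of an Ancestor Set all three indices lie in $A$, putting $(r_P, i, j)$ in $triples(A)$ as required.
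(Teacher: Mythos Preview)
Your proof of the equality $Span(P)=Span(Q)$ is exactly the paper's argument: two applications of Lemma~\ref{spanSubset}. Where you diverge is on the closure claim. The paper's proof consists solely of the two containments and says nothing at all about closure; you supply an actual argument by tracing the cascade step at which $P$ is appended to $Span(Q)$ and extracting the responsible triple $(r_P,i,j)$ as the witness required by Definition~\ref{closedSpan}. That argument is correct: the cascade in this section is taken through $triples(A)$, so the triple you find lies in $triples(A)$ automatically (your final paragraph's verification via ancestors is sound but not strictly needed). In short, your proposal is correct and strictly more complete than the paper's own proof, which leaves the closure assertion unjustified.
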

\begin{proof} 
By Lemma (\ref{spanSubset}), \[Span( Q) \subset Span( P)\]
\[Span( P) \subset Span( Q).\]
\end{proof}
For equivalent primes $P$ and $Q$, whenever $P$ is covered, $Q$ also is covered and vice-versa. A set of primes that are equivalent can be represented by the one with the lowest cost.

\begin{lemma} If $P$ is equivalent to $Q$, then 
\begin{itemize}
\item[] $Anc(P)=Anc(Q)$.
\item[] $Anc(P)$ and $Anc(Q)$ are closed with respect to $P$ and $Q$.
\end{itemize}
\end{lemma}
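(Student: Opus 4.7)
The plan is to reduce this lemma to three earlier results: the Equivalence Lemma just above (which gives $Span(P)=Span(Q)$ and closure of that common span), the lemma ``Prime $P$ is an ancestor of every product in $Span(P)$'' (which converts span membership into ancestor membership), and the monotonicity lemma ``If $Y \in Anc(X)$ then $Anc(Y) \subset Anc(X)$'' (which lets us compare entire ancestor sets from a single membership fact).

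First I would extract from the definition of equivalence (restated by the Equivalence Lemma) that $P \in Span(Q)$ and $Q \in Span(P)$. Applying the ``prime is an ancestor of its span'' lemma to each, I obtain $Q \in Anc(P)$ and $P \in Anc(Q)$. Now monotonicity gives $Anc(Q) \subset Anc(P)$ from $Q \in Anc(P)$, and symmetrically $Anc(P) \subset Anc(Q)$ from $P \in Anc(Q)$, yielding $Anc(P) = Anc(Q)$.

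For the closure statement, recall that $Anc(P)$ is closed with respect to $P$ precisely when $P \in Anc(P)$. But we already derived $P \in Anc(Q)$, and we have just shown $Anc(Q) = Anc(P)$, so $P \in Anc(P)$, which establishes closure with respect to $P$. The same argument with the roles of $P$ and $Q$ swapped yields $Q \in Anc(Q) = Anc(P)$, giving closure with respect to $Q$ as well.

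There is no real obstacle here; the whole proof is a short chase through definitions, and the only point requiring any care is being explicit that ``closed with respect to $P$'' is exactly the self-membership condition $P \in Anc(P)$ from the earlier definition, rather than conflating it with the closure notion for spans used in Definition \ref{closedSpan}. I would state this distinction in one sentence so the reader does not have to look it up.
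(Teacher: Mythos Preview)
Your proof is correct and follows essentially the same route as the paper's: both establish $P\in Anc(Q)$ and $Q\in Anc(P)$ from the equivalence hypothesis, apply the monotonicity lemma to get the two inclusions, and then read off closure as $P\in Anc(Q)=Anc(P)$ (and symmetrically for $Q$). Your explicit citation of the intermediate lemmas and the clarifying remark distinguishing the ancestor-set closure notion from the span closure of Definition~\ref{closedSpan} are welcome additions, but the underlying argument is the same.
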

\proof
 $Anc(P)=Anc(Q)$ since $P$ is an ancestor of $Q$ and $Q$ is an ancestor of $P$, so
 $Anc(P)\subset Anc(Q)$ and $Anc(Q)\subset Anc(P)$.
 $P\in Anc(P)$ since $P\in Anc(Q)=Anc(P)$. Similarly, $Q\in Anc(Q)$.
 \endproof

%%%%%% subsection  The Span Basis Theorem  %%%%%%%
\subsection{The Span Basis Theorem}\label{spanBasis}
It is not unusual for an Independent Ancestor Set to be equal to the span of one of its primes.
 The Span Basis Lemma states that in this case a minimum-cost cover for $A$ equals a minimum-cost cover 
 for $P$.

\begin{lemma}[Span Basis Lemma]
Suppose that $A$=$Anc(P)$ is an Independent Ancestor Set and $P\in~A$.
If $Span(P)$ in $triples(A)$ equals $A$, then $Span(P)$ is maximal and closed, and $Q_1,\dots Q_t$ is a 
minimum-cost cover for $A$ if and only if it is a minimum-cost cover for $P$.
\end{lemma}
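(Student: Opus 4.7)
The plan is to verify the three parts of the statement---maximality, closure, and the minimum-cost cover equivalence---one after the other, leveraging the hypothesis $Span(P) = A$ together with the monotonicity of spans already established in the paper.

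The first step is closure of $Span(P)$. Since $P \in A = Anc(P)$, a parent chain must lead back to $P$; in particular, $P$ has at least one parent, so the index $r$ of $P$ appears as the first coordinate of some triple $(r,i,j)$ in the current table. Every parent of $P$ lies in $Anc(P) = A$, so this triple actually lies in $triples(A)$. Because $Span(P) = A$ by hypothesis, the parents $X_i$ and $X_j$ (if $j > 0$) belong to $Span(P)$, which is exactly what Definition \ref{closedSpan} requires.

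Maximality comes next. For any other prime $Q$ in $A$, we have $Q \in A = Span(P)$, and Lemma \ref{spanSubset} gives $Span(Q) \subset Span(P)$; hence $Span(P)$ is not properly contained in the span of any other prime of $A$. To exhibit at least one prime in $Span(P)$ different from $P$, I would invoke Lemma \ref{primeCovered}: $P$ is covered by the span of the remaining non-free primes, and tracing back the cascade chain that witnesses this coverage, each intermediate product is an iterated parent of $P$ and hence sits inside $A = Anc(P)$. In particular, at least one other prime must appear in $A$.

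The cover-equivalence claim then follows by showing that the family of prime subsets that cover $A$ equals the family that covers $P$. The forward direction is immediate: any cover for $A$ covers every product in $A$, including $P$. For the reverse, suppose $\{Q_1,\dots,Q_t\}$ covers $P$. Extending Lemma \ref{spanSubset} to sets by a one-line cascade argument, $P \in Span(Q_1,\dots,Q_t)$ forces $Span(P) \subset Span(Q_1,\dots,Q_t)$; combined with $Span(P) = A$, this means $\{Q_i\}$ covers $A$. Hence the two families of covers coincide, and so do their minimum-cost representatives under any positive additive cost function.

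The step I expect to be trickiest is the ``at least one prime other than $P$'' clause in the maximality claim: the non-containment half is a one-liner via Lemma \ref{spanSubset}, but ruling out the pathological case in which $A$'s primeset is $\{P\}$ alone requires the careful appeal to Lemma \ref{primeCovered} sketched above. The remaining arguments are largely bookkeeping once the lift of Lemma \ref{spanSubset} to sets of primes is applied explicitly.
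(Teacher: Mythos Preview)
Your argument is correct and follows the same line as the paper's proof, which simply asserts that maximality and closure are ``clear'' and then observes that any cover for $A$ covers $P$ and vice versa. You have supplied the details the paper omits---in particular the verification that $A$ contains a prime other than $P$ via Lemma~\ref{primeCovered}, which the paper does not spell out at all---so your write-up is in fact more complete than the original while using the same underlying idea.
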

\proof
It is clear that $Span(P)$ is maximal and closed. 
Any cover for $A$ covers $P$, and any cover for $P$ covers $A$.
Hence, both have the same minimum-cost covers.
\endproof

The most desirable result of the Span Basis Lemma would be that a single prime $P$ is the minimum-cost cover for $A$.  The Span Basis Theorem that follows shows that this is the case 
for the most common cost functions when $P$ is equal to the minimum-cost prime in its equivalence class.  In this case, $P$ must belong to any minimum-cost basis for $f$. 

\begin{definition}
Cost function $C$ is a \emph{constant additive} cost function if $C(X)$ 
equals a constant $c$ for each product $X$, and $C$ is additive.
\end{definition}

\begin{theorem}[Span Basis Theorem]\label{SpanBasisTheorem}
Suppose that $f(x_1,\dots, x_n)$ is a Boolean function, and
\begin{enumerate}
\item $A$=$Anc(P)$ is an Independent Ancestor Set and $P\in~A$,
\item $Span(P)$ in $triples(A)=A$, 
\item for every prime $Q$ equivalent to $P$, $C(P)<=C(Q)$,\label{QequivP}
\item the cost function is either constant additive or satisfies Quine's hypothesis (namely that
the addition of a single factor to a formula increases the cost of the formula).
\end{enumerate}
Then $P$ is a minimum-cost cover for $A$, and $P$ 
(or an equivalent prime with the same cost) must be part of any minimum-cost basis. 
\end{theorem}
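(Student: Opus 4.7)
The plan is to use the Span Basis Lemma to reduce the problem from covering all of $A$ to covering the single prime $P$, and then to rule out every alternative cover of lower cost. Hypotheses 1 and 2 match the hypotheses of the Span Basis Lemma, which immediately tells me that a set of primes $\{Q_1, \ldots, Q_t\} \subseteq PS$ is a minimum-cost cover for $A$ in $triples(A)$ if and only if it is a minimum-cost cover for the singleton $\{P\}$. Since $\{P\}$ itself is a cover with cost $C(P)$, the minimum cost is at most $C(P)$, and it suffices to show that no cover of $\{P\}$ costs strictly less.

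For a singleton cover $\{Q\}$ of $\{P\}$, I would observe that $P \in Span(Q)$ by assumption, while $Q \in A = Span(P)$ by hypothesis 2, so $Q \in Span(P)$ as well; hence $P$ and $Q$ are equivalent primes in the sense of the Equivalence Lemma. Hypothesis 3 then forces $C(Q) \geq C(P)$, so no singleton cover is strictly cheaper than $\{P\}$, and equality corresponds exactly to the \emph{``or an equivalent prime with the same cost''} clause in the conclusion.

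For multi-prime covers $\{Q_1, \ldots, Q_t\}$ with $t \geq 2$ I would split on hypothesis 4. The constant-additive case is immediate: $\sum_i C(Q_i) = tc \geq 2c > c = C(P)$. Under Quine's hypothesis, the argument exploits the consensus derivation of $P$ from the $Q_i$'s guaranteed by $P$ lying in the cascade closure of $\{Q_1, \ldots, Q_t\}$ in $triples(A)$: each step $(U \circ V) = W$ of the derivation eliminates a reversal pair, so that $|S_U| + |S_V| \geq |S_W| + 2$. Working back through the chain, the multiset of factors contributed by the $Q_i$'s strictly dominates the factor set of $P$, and Quine's strict monotonicity of cost in the factor set lifts this factor surplus into the strict cost inequality $\sum_i C(Q_i) > C(P)$.

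The step I expect to be the main obstacle is the Quine's-hypothesis case: translating the strict factor surplus from the consensus derivation into a strict cost surplus for an arbitrary monotone cost function requires genuine care, since Quine's hypothesis constrains only the direction of cost changes and not their magnitude; one must combine the reversal-literal surplus at each consensus step with the fact that every literal of $P$ already appears somewhere in the $Q_i$'s so that monotonicity can be applied along a chain of factor-set inclusions. Once this step is done, all cases together establish that $\{P\}$ is a minimum-cost cover for $A$, and the Ancestor Theorem closes the argument: every minimum-cost basis for $f$ decomposes as the Essential Primes together with a minimum-cost cover of each Independent Ancestor Set, so any such basis must contain either $P$ itself or an equivalent prime of cost $C(P)$.
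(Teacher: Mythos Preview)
Your proposal follows essentially the same route as the paper's own proof: reduce via the Span Basis Lemma to covering $\{P\}$, handle singleton alternatives $\{Q\}$ through the equivalence hypothesis~(3), and dispose of multi-prime covers by the constant/Quine case split. Your Quine-case argument is in fact more detailed than the paper's, which simply asserts (without the literal-counting you attempt) that the union of the factors of two or more covering primes must properly contain the factor set of $P$ and then invokes Quine directly; your instinct that this is the step requiring the most care is correct.
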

\proof
Condition (\ref{QequivP}) can be guaranteed when the non-free primes are ordered by cost, 
and the span of the least-cost prime in a set of equivalent ones is always selected 
to represent the set of equivalent spans. In this case, any minimum cost cover of $P$
other than $P$ would consist of two or more primes. 
If $C$ is constant then the cost of any cover of $P$ 
with two or more primes is $>C(P)$. 
If $C$ satisfies Quine's hypothesis, then for any cover consisting of two or more 
primes, the union of the factors 
of the primes in such a cover of $P$ would have to properly contain every factor of $P$
along with other factors. Thus if the cost function 
satisfies Quine's hypothesis,  such a cover could not be minimum-cost.
\endproof

\subsection{Splitting an Independent Ancestor Set}
It may appear that it is not possible to further collapse an Independent Ancestor Set, but in fact,  an Independent Ancestor Set can be decomposed, (simplifying its solution) if it contains an \emph{Independent prime}.

\begin{definition} Suppose that prime $P$ belongs to Independent Ancestor Set $A$.
We say that prime $P$ is \emph{Independent} if $Span(P)$ in $triples(A)$ is maximal and closed, $Span(P)\neq A$, and 
$P$ does not belong to the span of its complement in $A$. 
\end{definition}
In this case, any minimum-cost cover for $A$ consists of a minimum-cost cover for $P$ (and 
hence, for $Span($P)) plus a minimum-cost cover of the primes in $A$ and outside of $Span(P)$.

The independence of $P$ can be checked by computing the span in $triples(A)$ of the set of products in $A$ that do not belong to $Span(P)$. If $P$ is not covered, then $P$ is Independent. 

A good candidate for independence is the prime in $A$ with the largest span, such that $Span(P)$ is  maximal and closed, and such that for every entry corresponding to $(P, X_i, X_j)$ in the triples 
table, $Xi$, and $Xj,~ (if ~X_j > 0)$, belong to $Span(P)$.  
The following Theorem describes the partition in more detail.
 
\begin{theorem}[Independent Prime Decomposition Theorem]\label{IndPrimeDecomp}   
If $P \in A$ is Independent, then $A$ can be split into $Span(P$) and one or more smaller 
Ancestor Sets.
\end{theorem}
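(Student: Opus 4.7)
The plan is to carve $Span(P)$ off of $A$ and show that the remainder $B := A \setminus Span(P)$ breaks up into one or more Ancestor Sets computed relative to a reduced triples table. Since $Span(P)$ is maximal and closed by hypothesis, the reduction is well-defined: cascading $P$ through $triples(A)$ removes all entries with $r$-index in $Span(P)$, and replaces any surviving $i$- or $j$-index in $Span(P)$ by $0$. Call the resulting table $T'$; by construction every index appearing in $T'$ lies in $B$, and $B$ is non-empty because $Span(P) \neq A$.

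The first key step is to show that $B$ contains at least one prime. Suppose every prime in the primeset $PS$ of $A$ lay in $Span(P)$. Then from the lemma that $A = Span(PS)$ with respect to $triples(A)$, together with the idempotence of the cascade operation (cascading an already cascaded set yields the same set), I would deduce $A = Span(PS) \subseteq Span(Span(P)) = Span(P)$, contradicting the hypothesis $Span(P)\neq A$. The next step is to show that every $r \in B$ still has a prime ancestor in $B$: if the set of prime ancestors of $r$ in $triples(A)$ were entirely contained in $Span(P)$, then cascading those primes would cover $r$, but that set is in turn covered by cascading $Span(P)$, forcing $r \in Span(P)$ and contradicting $r \in B$. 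Hence every $r \in B$ has a prime ancestor in $B$, and grouping products in $B$ by their (reduced) Prime Ancestor Sets in $T'$ yields a covering of $B$ by Ancestor Sets, each one a subset of $B \subsetneq A$, i.e., strictly smaller than $A$.

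The main obstacle will be verifying that the ancestor structure inside $T'$ is faithful, meaning that the chains of parenthood used to certify prime ancestors in $triples(A)$ are not all severed by the cascade substitutions. This is precisely where the Independence hypothesis $P \notin Span(B,\,triples(A))$ does the work: it guarantees that coverage within $B$ is self-contained and never has to route through $P$, so for each $r \in B$ some parent chain of $r$ back to a prime of $B$ survives the replacement of $Span(P)$-indices by $0$. With that in hand, the ancestor sets extracted from $T'$ partition $B$ (after discarding non-independent ones and keeping the independent pieces in the usual way of Section~\ref{BigIter}), and $A = Span(P) \cup B$ is the desired decomposition of $A$ into $Span(P)$ together with one or more strictly smaller Ancestor Sets.
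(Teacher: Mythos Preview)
Your approach matches the paper's: extract $Span(P)$, cascade it through $triples(A)$ to obtain a reduced table $T'$ on $B = A \setminus Span(P)$, and compute Ancestor Sets for $T'$. The paper's proof is purely algorithmic and offers no justification beyond listing those three steps and asserting the conclusion, so your auxiliary arguments (that $B$ is non-empty, that it contains primes, that products in $B$ retain prime ancestors) already go beyond what the paper supplies.

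One correction to your final paragraph: the Independence hypothesis $P \notin Span(B,\,triples(A))$ is not really what guarantees that parent chains in $T'$ reach primes. The reduced table $T'$ is a valid triples table in its own right, and the iteration of Section~\ref{BigIter} applies to it directly; the hypotheses that $Span(P)$ is maximal, closed, and properly contained in $A$ already suffice for the bare splitting claim in the theorem statement. The actual role of the Independence condition is to justify the paper's concluding sentence (which goes beyond the theorem as literally stated): that a \emph{minimum-cost cover} of $A$ decomposes as a minimum-cost cover of $Span(P)$ together with minimum-cost covers of the second-generation pieces. Without $P \notin Span(B)$, a cover of $B$ might already cover $P$ and hence all of $Span(P)$, making a separate cover of $Span(P)$ wasteful. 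You do not address this cost-decomposition claim, but the theorem as stated does not require it.
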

\proof
The splitting algorithm is:
\begin{enumerate}
\item Extract from $triples(A)$:\\
$triples(Span(P))= \{(r, i, j)\}~\text{such that } r, i , \text{ and } j (\text{ if } j>0)) \text{ belong to~} Span(P).$
\item Cascade Span(P) through the remaining entries.
\item	Calculate the Independent Ancestor Sets for the remaining triples.
\end{enumerate}
Thus, the solution to $A$ consists of a minimum-cost cover for $Span(P)$, as in Section(\ref{spanBasis}) plus the solutions to the second-generation Independent Ancestor Sets associated with the reduced table.
\endproof

If $C$ is constant or satisfies Quine's hypothesis and $C(P)<=C(Q)$ 
for every prime $Q$ equivalent to Independent prime $P$, 
then $P\in Anc(P)$ is a minimum-cost cover for $Span(P)$ and $P$ (or an equivalent prime with the same cost) must be part of any minimum-cost basis. 

Note that one or more of the second-generation Independent Ancestor Sets might contain Independent primes, and the process could iterate several times. However, the size of the new Ancestor Sets and of the their tables decreases at each iteration and the number of steps is linear in the data.
%%%%%%%%%%%  Appendix   %%%%%%%%%%%

\appendix

\section{An Algorithm for Reordering the Non-free Products in ALL}\label{reorder}
The following algorithm is convenient for problems of moderate size. 
The algorithm reorders $ALL$ while also computing 
the span of each non-free prime $P_k,~k=1\dots M$, 
as specified in definition \ref{spanDef}, that is,
\[Span(P_k)=Consensus(<Free>, P_k).\] 
The reordering places all non-free products that are in the span of a non-free prime
before any products that are not covered by an individual prime.
Recall that $Span(P_k)$ is closed if there are non-free products $X_i, X_j\in Span(P_k)$ with
$P_k=(X_i \circ X_j)$.

The algorithm also generates some special entries for the $triples0$ table. 
Since by definition, $P_k$ covers every $X_s$
in its span, each triple $(s,k,0)$ is a valid entry for $triples0$.
However, the algorithm below only saves entries that arise from maximal closed spans. 

\begin{enumerate}
\item Initialize $newALL=$Essentials, non-Essential frees, non-free~ primes.
\item For each $P_k$,
\begin{enumerate}
\item compute $Span(P_k)$.
\item for each non-free $Y\in Span(P_k)$ that is not in $newAll$,
append $Y$ to $newALL$. 
\end{enumerate}
\item For each $Z\in ALL$ that is not in $newALL$, append $Z$ to $newAll$.
\item Sort the spans by decreasing size and identify the maximal closed spans,
\[Span(Q_1),\dots Span(Q_h).\]
\item For each $X_r\in Span(Q_i)$, append $(r,i,0)$ to $triples0$.
\end{enumerate}

%%%%%%%%%%%%%%%%
%%%%%%%%%%%%%%
\section{An Algorithm for Generating the $triples$ Table}\label{genTable}
The first set of entries for $triples0$ were generated in Appendix \ref{reorder},
and consist of $(r,i,0)$ such that $X_r$ belongs to a maximal closed $Span(P_i)$.

The algorithm that follows  generates ordinary $triples0$ entries  
by taking the loose Iterated Consensus of non-free, non-prime products $X_i$ with free products.

Then the algorithm calculates $triples3$ entries by taking the consensus 
of pairs of non-free products $X_i$ and $X_j$.
For each non-free result $X_r$,  $(r,i,j)$ is tested to see if it is useful.
Recall that $(r,i,j)$ is useful if there is no $(r,i,0)$ or $(r,j,0)$ in $triples0$.
Only useful entries are appended to $triples3$.
 
 It is not difficult to organize the data in ways that speed up the processing.

\begin{enumerate}
\item Set $productList$ equal to the non-free products in $newALL$.
\item Part 1. Calculate ordinary $triples0$ entries:
\begin{enumerate}
\item For  each non-prime $X_i$ on the productList, compute the consensus of $X_i$ with each free product. If the result  $X_r=(X_i~o~ free)$ is non-free and $(r,i,0)$ is not a duplicate, 
append it to $triples0$.
\end{enumerate}
\item Part 2. Calculate the $triples3$ entries:
\begin{enumerate}
\item Perform Loose Iterated Consensus on the entries in productList. 
\item Test each non-free result $X_r=(X_i \circ X_j)$. If it is useful,  
append $(r,i,j)$ to $triples3$.
\end{enumerate}
\end{enumerate}

Note that there has to be at least one table entry $(r,i,j)$ for each non-free product $X_r$. 
After removing all inactive entries, there are entries for each active
non-free product. It is convenient to sort the table by $r-values$.

%%% A Direct Algorithm for Independent Ancestor Sets %%%
\section{A Direct Algorithm for Independent Ancestor Sets}\label{fastInd}
We assume that the non-free primes $P_1,\dots P_n$ have been arranged 
in order of increasing cost.  We compute the ancestor sets of the primes
starting from the most costly.

If $P_i$,  $i<n$, is a parent of $P_n$, 
then $Anc(P_i) \subseteq Anc(P_n)$.  
If the sets are equal, the least costly prime $P_i$ will be preferred to $P_n$ 
as the representative prime for the set.
If $Anc(P_i)$ is a proper subset, $Anc(P_n)$ cannot be independent. 
In either case, $P_n$ can be discarded.

If $P_i$,  $i<n$, is a grandparent of $P_n$, then again  $Anc(P_i) \subseteq Anc(P_n)$ 
and $P_n$ can be discarded. More generally, checking each generation of 
the ancestors of the remaining primes (processed in order of decreasing cost)
leads to more discards.  
The number of primes whose ancestor sets require full expansion shrinks quickly. 

One last check of the remaining set of primes is needed.
If  $P_j \in Anc(P_i)$ for $j>i$, then  $Anc(Pj)$ must be a proper subset of $Anc(P_i)$;
otherwise, $P_i \in Anc(P_j)$ and $P_j$ would have been discarded. 
Thus, $P_i$ can be discarded.
When this test has been completed, the remaining primes and their ancestor sets form the first batch of  Independent ancestor sets.

After extracting each set $triples(Anc(P_k))$ and cascading 
the union of the ancestor sets in this batch through
the triples table, the process is iterated until triples is empty.

Once all of the independent sets and their associated triples have been found, 
each Independent set can be checked to find the prime in the set whose span (within its triples set) is largest.
If  $Q \in Anc(P)$ and $Span(Q)=Anc(Q)$ (and hence equals $Anc(P)$) and the cost function is 
constant additive or satisfies Quine's hypothesis, 
then $Anc(Q)$ is solved. If $Span(Q)$ is not equal to $Anc(Q)$
but $Span(Q)$ is maximal and closed,
then $Q$ can be tested to see whether it is independent, which would enable the ancestor
set to be split. 

%%%%%%%%%REFERENCES

%%%%%%%%%%%%%%%

\end{document}